\newtheorem{lemma}{Lemma}[section]
\newtheorem{corollary}[lemma]{Corollary}
\newtheorem{theorem}[lemma]{Theorem}
\newtheorem{proposition}[lemma]{Proposition}
\theoremstyle{definition}
\newtheorem{remark}[lemma]{Remark}
\newtheorem{definition}[lemma]{Definition}
\newtheorem{example}[lemma]{Example}
\renewcommand{\L}{\Lambda}
\def\N{{\mathbb N}}
\def\l{{\lambda}}
\begin{document}
\title{The cycline subalgebra of a Kumjian-Pask algebra}

\author{Lisa Orloff Clark}
\address{Department of Mathematics and Statistics \\
University of Otago \\
P.O. Box: 56, Dunedin 9054, New Zealand}
\email{lclark@maths.otago.ac.nz}

\author{Crist\'{o}bal Gil Canto}
\address{Departamento de \'{A}lgebra, Geometr\'{\i}a y Topolog\'{\i}a\\
Universidad de M\'{a}laga\\
29071, M\'{a}laga, Spain}
\email{cristogilcanto@gmail.com / cgilc@uma.es}

\author{Alireza Nasr-Isfahani}
\address{Department of Mathematics\\
University of Isfahan\\
P.O. Box: 81746-73441, Isfahan, Iran\\ and School of Mathematics, Institute for Research in Fundamental Sciences (IPM), P.O. Box: 19395-5746, Tehran,
Iran}
\email{nasr$_{-}$a@sci.ui.ac.ir / nasr@ipm.ir}


\keywords{Kumjian-Pask algebras, higher-rank graph, uniqueness theorem}

\begin{abstract}
Let $\L$ be a row-finite higher-rank graph with no sources.
We identify a maximal commutative subalgebra
$\mathcal{M}$ inside the Kumjian-Pask algebra ${\rm KP}_R(\L)$.
We also prove a generalized Cuntz-Krieger uniqueness
theorem  for Kumjian-Pask algebras which says that
a representation of ${\rm KP}_R(\L)$ is injective if and only if
it is injective on $\mathcal{M}$.
\end{abstract}

\maketitle


\section{Introduction}

The Leavitt path algebra of a directed graph over a field is a specific type of path algebra associated to a
graph modulo some relations. Leavitt path algebras were introduced in \cite{AA1} and \cite{AMP}, and they are the purely algebraic version of Cuntz-Krieger graph $C^*$-algebras; on the other hand, they generalize the algebras without invariant basis number studied by Leavitt in \cite{L}. The relationship between the algebraic and analytic theories has been mutually beneficial.  Both families of algebras have proved to be rich sources of interesting examples and have
attracted interest from a broad range of researchers. In this paper we study analogues of Leavitt path algebras associated to higher-rank graphs; these
algebras are called Kumjian-Pask algebras. Concretely we extend the results given in \cite{GN} to Kumjian-Pask algebras.

Kumjian and Pask first introduced the notion of a higher-rank graph or $k$-graph $\L$ (in which paths have a $k$-dimensional
degree and a $1$-graph reduces to a directed graph) and the associated $C^*$-algebras $C^*(\L)$ in \cite{KP}.
These $C^*$-algebras provide a visualisable model for higher-rank versions of the Cuntz-Krieger algebras studied by Robertson and Steger
in \cite{RS}. The \emph{Kumjian-Pask algebra} ${\rm KP}_R(\L)$, defined and studied in \cite{ACaHR}, is an
algebraic version of $C^*(\L)$.
Kumjian-Pask algebras have a universal property based on a family of generators
satisfying suitable relations.   The study of
basic ideals and simplicity of ${\rm KP}_R(\L)$ is done in \cite{ACaHR}, the socle and semisimplicity are considered in \cite{BaH} and the
center is analysed in \cite{BaH2}. Kumjian-Pask algebras for more general graphs are considered in  \cite{CFaH} and \cite{CP}.

A central topic in $k$-graphs algebras
is to determine when a given homomorphism from ${\rm KP}_R(\L)$ (or $C^*(\L)$ in the analytic case) is injective;
this is the content of the so-called uniqueness theorems.
In \cite{ACaHR} a graded-uniqueness theorem and a Cuntz-Krieger uniqueness theorem  are proved
for ${\rm KP}_R(\L)$. Both require some conditions: the first one considers only $\mathbb{Z}^k$-graded homomorphisms, while the second requires the extra
hypothesis on the graph, that is, $\L$ is `aperiodic'.

In \cite{BNR},  a more general version of the
Cuntz-Krieger uniqueness theorem is proved in the $C^*$-algebraic setting that has no additional
hypotheses on the homomorphism or the graph.
Here we translate to Kumjian-Pask algebras the analytic result given in \cite{BNR}:
a representation of ${\rm KP}_R(\L)$ is injective if and only if it is injective on a
distinguished subalgebra, called the \emph{cycline subalgebra}.

At the same time we prove a more general version of the main results of \cite{GN} in the context of $1$-graphs. In \cite{GN} the second and third named authors prove a uniqueness theorem for Leavitt path algebras which establishes that the injectivity of a representation depends only on its injectivity on a certain commutative subalgebra \cite[Theorem 5.2]{GN}. Note that these results are not corollaries of the ones we obtain in this paper, since in \cite{GN} arbitrary graphs are considered and here we suppose that $k$-graphs are row-finite with no sources.

The paper is organized as follows. We begin with a section where we give
the background material, including the definition of ${\rm KP}_R(\L)$
and some basic properties. In Section 3 we establish some properties of the diagonal. In Section 4 we study the
cycline subalgebra.
Analogous to the
definition given in \cite{BNR},
the cycline subalgebra $\mathcal{M}$  is generated by elements of the
form $s_\alpha s_{\beta^*}$ where $(\alpha,\beta) \in \L \times \L$ is a \emph{cycline pair}
(Proposition \ref{Cyclinepairs}). In Theorem \ref{Mismaximalcommutative}, we prove that
$\mathcal{M}$ is a maximal commutative subalgebra inside ${\rm KP}_R(\L)$.

Finally in Section 5 we give our main result
Theorem \ref{UniquenessTh}: for $\Lambda$ a row-finite $k$-graph without sources,
$\Phi: {\rm KP}_R(\Lambda) \rightarrow A$ is an injective ring
homomorphism if and only if $\Phi |_{\mathcal{M}}$ is injective.


\section{Preliminaries}

First we give some necessary background which will be used later and we fix our notation.
Let $k$ be a positive integer. We consider the additive semigroup $\mathbb{N}^k$ as a category
with one object. We say a countable category $\Lambda = (\Lambda^0,\Lambda, r, s)$ with objects
$\Lambda^0$, morphisms $\Lambda$, range map $r$ and source map $s$, is a \emph{$k$-graph} if there
exists a functor $d : \Lambda \rightarrow  \mathbb{N}^k$, called the \emph{degree map}, satisfying
the \emph{unique factorization property}: if $d(\lambda) = m + n$ for some $m, n \in  \mathbb{N}^k$,
then there exist unique $\mu, \nu \in \mathbb{N}^k$ such that $r(\nu ) = s(\mu)$ and $d(\mu) = m$, $d(\nu ) = n$
with $\lambda = \mu\nu$. Since we think of $\Lambda$ as a generalized graph, we call $\lambda \in \Lambda$ a \emph{path}
in $\Lambda$ and $v \in \Lambda^0$ a \emph{vertex}.

For $n \in \mathbb{N}^k$ define $\Lambda^n = d^{-1}(\{n\})$ and call the elements $\lambda$ of $\Lambda^n$ \emph{paths of degree} $n$; by the factorization
property we identify $\Lambda^0$ as the paths of degree $0$ (or the set of vertices). For any $v \in \Lambda^0$ and $X \subseteq \Lambda$ we denote $vX =
\{ \lambda \in X \ | \ r(\lambda) = v\}$ and $Xv=\{\lambda \in X \ | \ s(\lambda)=v\}$. A $k$-graph $\Lambda$ has \emph{no sources} if for all $v \in
\Lambda^0$ and $n \in \mathbb{N}^k$ the set $v\Lambda^n$ is nonempty; $\Lambda$ is \emph{row-finite} if for all $v \in \Lambda^0$ and $n \in \mathbb{N}^k$
the set $v\Lambda^n$ is finite. In this paper we are concerned only with row-finite $k$-graphs without sources.

\begin{example} Let $E=(E^0,E^1,r,s)$ be a directed graph. Then the \emph{path category} $\Lambda_E$ has object set $E^0$, and the morphisms in $\Lambda_E$
from $v\in E^0$ to $w\in E^0$ are finite paths $\mu$ with $s(\mu)=v$ and $r(\mu)=w$;  composition is defined by concatenation, and the identity morphisms
obtained by viewing the vertices as paths of length $0$. With the degree functor $d: \Lambda_E \rightarrow \mathbb{N}$ as the length function, the path
category $(\Lambda_E,d)$ is a $1$-graph. Note that this requires us to use the convention where a path is a sequence of edges $e_1 \ldots e_n$ such that $s(e_i)=r(e_{i+1})$.
\end{example}

For $m,n \in \mathbb{N}^k$, $m\leq n$ means $m_i \leq n_i$ for all $1\leq i \leq k$ and $m \vee n$ denotes the pointwise maximum.

\begin{example} Let $\Omega_k^0 =\mathbb{N}^k$, $\Omega_k =\{(p,q)\in \mathbb{N}^k\times\mathbb{N}^k : p\leq q\}$, define $r,s:\Omega_k\to \Omega_k^0$ by
$r(p,q) =p$ and $s(p,q) =q$, define composition by $(p,q)(q,r)=(p,r)$, and define $d:\Omega_k\to\mathbb{N}^k$ by $d(p,q)=q-p$. Then
$\Omega_k=(\Omega_k,r,s,d)$ is a $k$-graph.

\end{example}

An \emph{infinite path} in $\Lambda$ is a
degree-preserving functor $x:\Omega_k\to \Lambda$. We denote the set of all
infinite paths by $\Lambda^\infty$. We denote $x(0,0)$ by $r(x)$ and refer to this vertex as the \emph{range} of $x$.

For $\mu \in \Lambda$ define the \emph{cylinder set} \[Z(\mu) :=\{ x \in \Lambda^{\infty} \ | \ x(0,d(\mu))= \mu\}.\]
For $\alpha \in \Lambda$ and $x \in Z(s(\alpha))$, define $\alpha x$ to be the unique infinite
path such that for any $n \geq d(\alpha)$, we have that $(\alpha x)(0,n)=\alpha(x(0,n-d(\alpha)))$.
We denote $Z(\alpha)$ by $\alpha\Lambda^{\infty}$ when $\alpha$ is a vertex.  The collection
of cylinder sets is a basis of compact sets for a Hausdorff topology on $\L^{\infty}$ (see \cite[Proposition 2.8]{KP}).

For $p \in \mathbb{N}^k$ we define a map $\sigma^p: \Lambda^{\infty} \rightarrow \Lambda^{\infty}$
by $\sigma^p(x)(m,n)=x(m+p,n+p)$ for every $x \in \Lambda^\infty$, $(m,n) \in \Omega_k$.
Note that by unique factorization $x=x(0,p)\sigma^p(x)$.

We say that a path $x \in \Lambda^\infty$ is
\emph{periodic} if there exists $p \neq q \in \mathbb{N}^k$ such that
$\sigma^p(x)=\sigma^q(x)$.  That is, for all $m,n \in \mathbb{N}^k$ $x(m+p,n+p)=x(m+q,n+q)$.
If $x$ is not periodic, we say $x$ is \emph{aperiodic}.
A row-finite $k$-graph with no sources is called \emph{aperiodic} if for every vertex $v \in \Lambda^0$,
there exists an aperiodic path in $v\Lambda^\infty$.

If $\L$ is a $k$-graph, we let $\L^{\not=0}:=\{\l\in\L:d(\l)\not=0\}$.
For each $\l\in\L^{\not=0}$ we introduce a \emph{ghost path}
$\l^*$ and for $v\in \L^0$ define $v^*:=v$. We write $G(\Lambda)$
for the set of ghost paths, or $G(\L^{\not=0})$ if we wish to exclude
vertices. We define $d$, $r$ and $s$ on $G(\L)$ by $d(\lambda^*) = -d(\lambda)$,
$r(\lambda^*) = s(\lambda)$, $s(\lambda^*) = r(\lambda)$; we then define composition
on $G(\L)$ by setting $\lambda^*\mu^*= (\mu\lambda)^*$ for $\lambda, \mu\in\L^{\neq 0}$
with $r(\mu^*) = s(\lambda^*)$. The factorization property of $\Lambda$ induces a similar
factorization property on $G(\L)$.

\begin{definition}\label{DefKumjianPask}
Let $\L$ be a row-finite $k$-graph without sources and let $R$ be a commutative ring with $1$. A \emph{Kumjian-Pask $\L$-family} $(P,S)$
in an $R$-algebra $A$ consists of two functions $P:\L^0\to A$ and $S:\L^{\not=0}\cup G(\L^{\not=0})\to A$ such that:
\begin{enumerate}
\item[(KP1)] $\{P_v:v\in \L^0\}$ is a family of mutually orthogonal idempotents;
\item[(KP2)] for all $\lambda, \mu\in\L^{\neq 0}$ with $r(\mu) = s(\lambda)$, we have
$$ S_{\lambda}S_{\mu} = S_{\lambda\mu}, \; S_{\mu^*}S_{\lambda^*} = S_{(\lambda\mu)^*}, \;
 P_{r(\lambda)}S_{\lambda} = S_{\lambda} = S_{\lambda}P_{s(\lambda)}, \;
  P_{s(\lambda)}S_{\lambda^*} = S_{\lambda^*} = S_{\lambda^*}P_{r(\lambda)};$$
\item[(KP3)] for all $\lambda, \mu \in\L^{\neq 0}$ with $d(\lambda) = d(\mu)$, we have $S_{\lambda^*}S_{\mu} = \delta_{\lambda,\mu}P_{s(\lambda)}$; and
\item[(KP4)] for all $v\in\L^0$ and all $n\in {\mathbb N}^k\setminus \{0\}$, we have $P_v = \sum_{\lambda\in v\L^n} S_{\lambda}S_{\lambda^*}$.
\end{enumerate}
\end{definition}

In \cite[Theorem 3.4]{ACaHR} it is proved that there is an $R$-algebra ${\rm KP}_R(\L)$ generated by a Kumjian-Pask $\L$-family $(p,s)$ with the following
universal property: whenever $(Q,T)$ is a Kumjian-Pask $\L$-family in an $R$-algebra $A$, there is a unique $R$-algebra homomorphism $\pi_{Q,T}: {\rm
KP}_R(\L) \rightarrow A$ such that $\pi_{Q,T}(p_v)=Q_v$, $\pi_{Q,T}(s_\l)=T_\l$, $\pi_{Q,T}(s_{\mu^*})=T_{\mu^*}$ for $v \in \L^0$ and $\l,\mu \in \L^{\neq
0}$. We call ${\rm KP}_R(\L)$ the \emph{Kumjian-Pask algebra} of $\L$ and the generating family $(p,s)$ the \emph{universal Kumjian-Pask family}.

With the convention that $s_v = p_v$ and $s_{v^*}=p_v$ it follows from the \emph{Kumjian-Pask relations} (KP1)-(KP4) that
$${\rm KP}_R(\L)=\text{span}_R\{s_\mu s_{\nu^*}: \mu,\nu \in \L \text{ with } s(\mu)=s(\nu)\},$$
and $s_\mu s_{\nu^*} \neq 0$ in ${\rm KP}_R(\L)$ if $s(\mu)=s(\nu)$.
For every nonzero $a \in {\rm KP}_R(\L)$ and $n \in \N^k$ there exist $m \geq n$ and
a finite subset $F \subseteq \L \times \L^m$ such that $s(\alpha)=s(\beta)$ for
all $(\alpha,\beta) \in F$ and
\[a = \sum_{(\alpha,\beta) \in F} r_{\alpha,\beta}s_{\alpha}s_{\beta^*}\]
with $r_{\alpha,\beta} \in R \setminus \{0\}$. In this situation,
we say that $a$ is written in \emph{normal form} \cite[Lemma 4.2]{ACaHR}.
We can define an $R$-linear involution $a \mapsto a^*$ on ${\rm KP}_R(\L)$ as follows: if $a = \sum_{(\alpha,\beta) \in F}
r_{\alpha,\beta}s_{\alpha}s_{\beta^*}$ then $a^* = \sum_{(\alpha,\beta) \in F} r_{\alpha,\beta}s_{\beta}s_{\alpha^*}$.

It is proved in \cite[Theorem 3.4]{ACaHR} that ${\rm KP}_R(\L)$ is graded by $\mathbb{Z}^k$ such that for each $n \in \mathbb{Z}^k$,
$${\rm KP}_R(\Lambda)_n=\text{span}_R\{s_{\lambda}s_{\mu^*}:\lambda,\mu \in \Lambda \text{ and }d(\lambda)-d(\mu)=n\}.$$
\cite[Theorem 3.4]{ACaHR} also says that $rp_v \neq 0$ for $v \in \L^0$ and $r \in R \setminus \{0\}$.


\section{The diagonal subalgebra}

In this section we establish some properties of  the \emph{diagonal subalgebra}  $\mathcal{D}$ of the
Kumjian-Pask algebra ${\rm KP}_R(\L)$. Define
\[\mathcal{D} := \ \langle s_\mu s_{\mu^*}: \mu \in \L \rangle ;\]
that is, $\mathcal{D}$ is the $R$-subalgebra of
${\rm KP}_R(\L)$ generated by the set $\{s_\mu s_{\mu^*}: \mu \in \L\}$.
Observe that for $ v \in \L^0$, $s_v s_{v^*}=p_v$.

\begin{lemma} The diagonal subalgebra $\mathcal{D}$ is commutative.
\end{lemma}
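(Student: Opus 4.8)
The goal is to show $\mathcal{D}$ is commutative, i.e., that the generators $s_\mu s_{\mu^*}$ and $s_\nu s_{\nu^*}$ commute for all $\mu, \nu \in \L$. Since $\mathcal{D}$ is generated as an $R$-algebra by these elements, it suffices to check this on generators. My plan is to compute the product $s_\mu s_{\mu^*} s_\nu s_{\nu^*}$ directly and show the answer is symmetric in $\mu$ and $\nu$ (in fact, show it is either $0$ or again of the form $s_\gamma s_{\gamma^*}$ for some common extension $\gamma$).

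First, I would reduce to the case $s(\mu) = r(\mu)$-compatible situations by noting $s_\mu s_{\mu^*}$ is a self-adjoint idempotent: using (KP3), $s_{\mu^*} s_\mu = p_{s(\mu)}$, hence $(s_\mu s_{\mu^*})^2 = s_\mu p_{s(\mu)} s_{\mu^*} = s_\mu s_{\mu^*}$, and it equals its own image under the involution $a \mapsto a^*$. The key step is to evaluate $s_{\mu^*} s_\nu$. Write $d(\mu) = m$ and $d(\nu) = n$, set $p = m \vee n$, and use (KP4) to expand $p_{s(\mu)} = \sum_{\lambda \in s(\mu)\L^{p-m}} s_\lambda s_{\lambda^*}$ and similarly for $p_{s(\nu)}$, so that $s_\mu = \sum_{\lambda} s_{\mu\lambda} s_{\lambda^*}$ and $s_\nu = \sum_{\rho} s_{\nu\rho} s_{\rho^*}$, where now $\mu\lambda$ and $\nu\rho$ both have degree $p$. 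Then
\[
s_\mu s_{\mu^*} \cdot s_\nu s_{\nu^*} \;=\; \sum_{\lambda, \rho} s_{\mu\lambda} s_{\lambda^*} s_{\mu^*} \cdot s_\nu s_{\rho} s_{\rho^*} s_{\nu^*}.
\]
Regrouping via (KP2), $s_{\lambda^*} s_{\mu^*} = s_{(\mu\lambda)^*}$ and $s_\nu s_\rho = s_{\nu\rho}$, and now (KP3) applies to $s_{(\mu\lambda)^*} s_{\nu\rho}$ since $d(\mu\lambda) = d(\nu\rho) = p$: this is $\delta_{\mu\lambda,\,\nu\rho}\, p_{s(\mu\lambda)}$. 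Hence the whole product collapses to $\sum_{\mu\lambda = \nu\rho} s_{\mu\lambda} s_{(\mu\lambda)^*}$, summing over pairs $(\lambda,\rho)$ with $\mu\lambda = \nu\rho$, and this expression is manifestly symmetric under swapping $(\mu,\lambda) \leftrightarrow (\nu,\rho)$, hence symmetric in $\mu$ and $\nu$. Therefore $s_\mu s_{\mu^*} s_\nu s_{\nu^*} = s_\nu s_{\nu^*} s_\mu s_{\mu^*}$.

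I expect the only subtlety is bookkeeping with the degrees: making sure $p - m, p - n \in \N^k$ (which holds since $p = m \vee n \ge m, n$), that (KP4) is being applied at a nonzero degree when $p \neq m$ (when $p = m$ the expansion is trivial, $s_\mu$ itself, and the argument still goes through), and correctly tracking when $s(\mu) = r(\lambda)$ so the concatenations $\mu\lambda$ are defined. One should also observe that when $s(\mu) \neq s(\nu)$ in a way that forces the product to vanish, the computation simply yields an empty sum on both sides. None of these is a real obstacle; the computation is a routine application of the Kumjian-Pask relations.
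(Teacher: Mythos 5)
Your proof is correct and follows essentially the same route as the paper: both express $s_\mu s_{\mu^*}s_\nu s_{\nu^*}$ as a sum of $s_\tau s_{\tau^*}$ over common extensions $\tau$ of degree $d(\mu)\vee d(\nu)$, which is manifestly symmetric in $\mu$ and $\nu$. The only difference is that you derive the key expansion directly from (KP3) and (KP4), whereas the paper quotes it as \cite[Lemma 3.3]{ACaHR}.
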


\begin{proof} Using the Kumjian-Pask relations it follows that for each $n$, $\{s_\mu s_{\mu^*}: \mu \in \L^n\}$ is a set of mutually orthogonal
idempotents (see \cite[Remarks 3.2]{ACaHR}). Now if we consider $\l, \mu \in \L$ then by \cite[Lemma 3.3]{ACaHR}, for each $q\geq d(\lambda)\vee d(\mu)$ we
have that
$$ s_{\lambda^*}s_{\mu} = \sum_{d(\lambda\alpha)=q,\;\lambda\alpha = \mu\beta}s_{\alpha}s_{\beta^*}.$$
Then $(s_\l s_{\lambda^*})(s_{\mu} s_{\mu^*}) = (s_\mu s_{\mu^*})(s_\l s_{\l^*})$ because:
\begin{align*}
(s_\l s_{\lambda^*})(s_{\mu} s_{\mu^*}) &= s_\l (s_{\lambda^*}s_{\mu}) s_{\mu^*} \\
&=  s_\l (\sum_{d(\lambda\alpha)=q,\;\lambda\alpha= \mu\beta}s_{\alpha}s_{\beta^*}) s_{\mu^*} \\
&= \sum_{d(\lambda\alpha)=q,\;\lambda\alpha = \mu\beta}s_\l s_{\alpha}s_{\beta^*}s_{\mu^*}\\
&= \sum_{d(\lambda\alpha)=q,\;\lambda\alpha = \mu\beta}s_{\l\alpha}s_{{(\mu\beta)}^*} \\
&=\sum_{d(\mu\beta)=q,\;\mu\beta=\lambda\alpha}s_{\mu\beta}s_{{(\l\alpha)}^*}\\
&=\sum_{d(\mu\beta)=q,\;\mu\beta=\lambda\alpha}s_{\mu}s_{\beta}s_{\alpha^*}s_{\l^*}\\
\end{align*}
\begin{align*}
&= s_{\mu}(\sum_{d(\mu\beta)=q,\;\mu\beta=\lambda\alpha}s_{\beta}s_{\alpha^*})s_{\l^*}\\
&= s_\mu (s_{\mu^*}s_\l) s_{\l^*} \\
&= (s_\mu s_{\mu^*})(s_\l s_{\l^*}).
\end{align*}
So $\mathcal{D}$ is commutative.
\end{proof}

\begin{remark}\label{remark:Z} Observe that
$$s_{\alpha\gamma}s_{(\alpha\gamma)^*}s_\alpha s_{\beta^*} s_{\beta\eta}s_{(\beta\eta)^*}= s_{\alpha\gamma}s_{\gamma^*} s_\eta s_{(\beta\eta)^*}.$$
In particular if $d(\gamma)=d(\eta)$, then
\begin{align*}
s_{\alpha\gamma}s_{(\alpha\gamma)^*}s_\alpha s_{\beta^*} s_{\beta\eta}s_{(\beta\eta)^*} & = \begin{cases}
s_{\alpha\gamma}s_{(\beta\gamma)^*} & \text{if }\ \gamma=\eta,\\
0 & \text{otherwise. }\  \end{cases}
\end{align*}
\end{remark}

We may view elements of the diagonal
$\mathcal{D}$ as functions from $\L^{\infty}$ to $R$ in the following way.
For $\mu \in \Lambda$ let $1_{Z(\mu)}$ denote the characteristic function association to $Z(\mu)$.  That is,
$1_{Z(\mu)} : \Lambda^{\infty} \rightarrow R$ such that
\begin{align*}
1_{Z(\mu)}(x)  & = \begin{cases}
1 & \text{if }\ x \in Z(\mu),\\
0 & \text{otherwise. }\  \end{cases}
\end{align*}
Let
\[A_{\mathcal{D}}:= {\rm span}_R\{1_{Z(\mu)}: \ \mu \in \L \}.\]
Then $A_{\mathcal{D}}$ as an $R$-algebra with addition and scalar multiplication defined pointwise and multiplication
of the generators is given by
$$1_{Z(\mu)} \cdot 1_{Z(\nu)} = 1_{Z(\mu) \cap Z(\nu)}.$$

\begin{remark}
In fact $A_{\mathcal{D}}$ is the
diagonal of the \emph{Steinberg algebra} associated to $\L$ (see Section 5 of \cite{CP} for more details).
\end{remark}
The following lemma follows from \cite[Proposition 5.4]{CP}:
\begin{lemma}\label{DiagonalIsomorphismLemma} The map $\pi: \mathcal{D} \rightarrow A_{\mathcal{D}}$ such that $\pi(s_\mu s_{\mu^*})= 1_{Z(\mu)}$ is an
isomorphism.
\end{lemma}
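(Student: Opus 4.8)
The plan is to exhibit both $\mathcal{D}$ and $A_{\mathcal{D}}$ as directed unions of finite-degree pieces on which the prescribed assignment is manifestly an $R$-module isomorphism, and then to check multiplicativity one piece at a time. First one shows $\mathcal{D}=\mathrm{span}_R\{s_\mu s_{\mu^*}:\mu\in\Lambda\}$: applying \cite[Lemma 3.3]{ACaHR} with $q=d(\mu)\vee d(\nu)$ gives $(s_\mu s_{\mu^*})(s_\nu s_{\nu^*})=\sum_\tau s_\tau s_{\tau^*}$, the sum over the finite set of common extensions $\tau=\mu\alpha=\nu\beta$ of minimal degree $d(\mu)\vee d(\nu)$, so a product of generators is again an $R$-combination of generators. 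For $n\in\mathbb{N}^k$ put $\mathcal{D}_n:=\mathrm{span}_R\{s_\mu s_{\mu^*}:\mu\in\Lambda^n\}$ and $A_n:=\mathrm{span}_R\{1_{Z(\mu)}:\mu\in\Lambda^n\}$. For $n\ge d(\mu)$, relation (KP4) gives $s_\mu s_{\mu^*}=\sum_{\lambda\in s(\mu)\Lambda^{\,n-d(\mu)}}s_{\mu\lambda}s_{(\mu\lambda)^*}$, while the disjoint decomposition $Z(\mu)=\bigsqcup_\lambda Z(\mu\lambda)$ gives $1_{Z(\mu)}=\sum_\lambda 1_{Z(\mu\lambda)}$; hence $\mathcal{D}_n\subseteq\mathcal{D}_{n'}$ and $A_n\subseteq A_{n'}$ whenever $n\le n'$, and since $\mathbb{N}^k$ is directed and each generator lies in some fixed-degree piece we obtain $\mathcal{D}=\bigcup_n\mathcal{D}_n$ and $A_{\mathcal{D}}=\bigcup_n A_n$.

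The crux is that for each fixed $n$ both $\{s_\mu s_{\mu^*}:\mu\in\Lambda^n\}$ and $\{1_{Z(\mu)}:\mu\in\Lambda^n\}$ are $R$-linearly independent. For the first family, suppose $\sum_{\mu\in\Lambda^n}c_\mu s_\mu s_{\mu^*}=0$; multiplying on the left by $s_\nu s_{\nu^*}$ and using (KP3) isolates $c_\nu s_\nu s_{\nu^*}=0$, and then conjugating by $s_{\nu^*}$ and $s_\nu$ and using $s_{\nu^*}s_\nu=p_{s(\nu)}$ yields $c_\nu p_{s(\nu)}=0$, whence $c_\nu=0$ by the faithfulness statement $rp_v\neq0$ of \cite[Theorem 3.4]{ACaHR}. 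For the second family, each $Z(\nu)$ is nonempty because $\Lambda$ has no sources, and the $Z(\mu)$ with $\mu\in\Lambda^n$ are pairwise disjoint, so evaluating $\sum c_\mu 1_{Z(\mu)}=0$ at a point of $Z(\nu)$ gives $c_\nu=0$. Therefore $s_\mu s_{\mu^*}\mapsto 1_{Z(\mu)}$ ($\mu\in\Lambda^n$) extends to a well-defined $R$-module isomorphism $\pi_n:\mathcal{D}_n\to A_n$, and the two expansion formulas above show the $\pi_n$ are compatible with the inclusions $\mathcal{D}_n\hookrightarrow\mathcal{D}_{n'}$, $A_n\hookrightarrow A_{n'}$; they therefore patch to an $R$-module isomorphism $\pi:\mathcal{D}\to A_{\mathcal{D}}$ with $\pi(s_\mu s_{\mu^*})=1_{Z(\mu)}$.

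Finally, $\pi$ is multiplicative: given $a,b\in\mathcal{D}$ choose $n$ with $a,b\in\mathcal{D}_n$ and write $a=\sum_{\mu\in\Lambda^n}a_\mu s_\mu s_{\mu^*}$, $b=\sum_{\mu\in\Lambda^n}b_\mu s_\mu s_{\mu^*}$; since $\{s_\mu s_{\mu^*}:\mu\in\Lambda^n\}$ and $\{1_{Z(\mu)}:\mu\in\Lambda^n\}$ are each families of mutually orthogonal idempotents (see \cite[Remarks 3.2]{ACaHR}, and note $Z(\mu)\cap Z(\nu)=\emptyset$ for $\mu\neq\nu$ in $\Lambda^n$), we get $ab=\sum_\mu a_\mu b_\mu s_\mu s_{\mu^*}$ and $\pi(a)\pi(b)=\sum_\mu a_\mu b_\mu 1_{Z(\mu)}=\pi(ab)$. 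The one genuinely nontrivial point is the linear independence of $\{s_\mu s_{\mu^*}:\mu\in\Lambda^n\}$, which is exactly where the hypothesis $rp_v\neq0$ enters; everything else is bookkeeping with the Kumjian-Pask relations and the combinatorics of cylinder sets. Alternatively, as the paper indicates, one may bypass this argument by identifying $\mathcal{D}$ with the diagonal of the Steinberg algebra of $\Lambda$ and quoting \cite[Proposition 5.4]{CP}.
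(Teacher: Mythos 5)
Your proof is correct, but it is a genuinely different route from the paper's: the paper disposes of this lemma in one line by identifying $\mathcal{D}$ with the diagonal of the Steinberg algebra of $\Lambda$ and invoking \cite[Proposition 5.4]{CP} (an option you mention only as an aside), whereas you give a direct, self-contained argument inside ${\rm KP}_R(\Lambda)$. Your decomposition of both $\mathcal{D}$ and $A_{\mathcal{D}}$ into the directed union of the degree-$n$ pieces, with the (KP4) expansion $s_\mu s_{\mu^*}=\sum_{\lambda\in s(\mu)\Lambda^{n-d(\mu)}}s_{\mu\lambda}s_{(\mu\lambda)^*}$ matching the disjoint refinement $Z(\mu)=\bigsqcup_\lambda Z(\mu\lambda)$, is exactly the right bookkeeping, and you correctly isolate the one substantive input: linear independence of $\{s_\mu s_{\mu^*}:\mu\in\Lambda^n\}$, which you obtain from (KP3) together with the faithfulness statement $rp_v\neq 0$ of \cite[Theorem 3.4]{ACaHR}; the independence of the characteristic functions uses that cylinder sets over $\Lambda^n$ are pairwise disjoint and nonempty (the no-sources hypothesis guaranteeing an infinite path through every vertex). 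The compatibility of the $\pi_n$ with the inclusions and the orthogonality-based check of multiplicativity are routine and correctly done. What your approach buys is independence from the groupoid/Steinberg-algebra machinery of \cite{CP}, making the lemma accessible from \cite{ACaHR} alone; what the paper's citation buys is brevity and the fact that \cite[Proposition 5.4]{CP} is proved in the greater generality of finitely aligned $k$-graphs, so nothing new needs to be verified in the row-finite case.
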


\begin{lemma}\label{Diagonal1Prop} Let $\mu, \nu \in \L$ with $s(\mu)=s(\nu)$ and $r \in R$ such that
$r s_\mu s_{\mu^*} = r s_\nu s_{\nu^*}$. Then $s_\mu s_{\mu^*} = s_\nu s_{\nu^*}$.
\end{lemma}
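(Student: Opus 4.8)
The plan is to push the identity through the isomorphism $\pi\colon\mathcal{D}\to A_{\mathcal{D}}$ of Lemma~\ref{DiagonalIsomorphismLemma}, where it becomes an equality of honest $R$-valued functions on $\L^{\infty}$ that can be tested pointwise. Since $\pi$ is $R$-linear and $\pi(s_\mu s_{\mu^*})=1_{Z(\mu)}$, applying $\pi$ to $r s_\mu s_{\mu^*}=r s_\nu s_{\nu^*}$ yields
\[ r\,1_{Z(\mu)}=r\,1_{Z(\nu)} \qquad\text{as functions } \L^{\infty}\to R. \]

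Next I would deduce that $Z(\mu)=Z(\nu)$, and this is the step that uses $r\neq 0$ (for $r=0$ the conclusion can fail, so the hypothesis is to be read with $r\in R\setminus\{0\}$). Suppose $Z(\mu)\neq Z(\nu)$; interchanging $\mu$ and $\nu$ if necessary (the hypothesis is symmetric in them), choose $x\in Z(\mu)\setminus Z(\nu)$. Evaluating the two functions above at $x$ gives $r=r\cdot 1_{Z(\mu)}(x)=r\cdot 1_{Z(\nu)}(x)=0$, a contradiction. Hence $Z(\mu)=Z(\nu)$, so $1_{Z(\mu)}$ and $1_{Z(\nu)}$ are literally the same function.

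Finally, apply the injective map $\pi^{-1}$ to $1_{Z(\mu)}=1_{Z(\nu)}$ to obtain $s_\mu s_{\mu^*}=s_\nu s_{\nu^*}$. I expect the only genuine point to be the legitimacy of passing to $A_{\mathcal{D}}$, which is exactly the content of Lemma~\ref{DiagonalIsomorphismLemma}; after that the argument is a single evaluation at a point of $\L^{\infty}$. One could alternatively stay inside ${\rm KP}_R(\L)$ by left-multiplying the hypothesis by $s_{\mu^*}$ and right-multiplying by $s_\mu$ to get $r\,p_{s(\mu)}=r\,s_{\mu^*}s_\nu s_{\nu^*}s_\mu$ and invoking $r p_v\neq 0$ from \cite[Theorem~3.4]{ACaHR}, but the function-algebra route is shorter.
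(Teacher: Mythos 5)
Your argument is essentially the paper's own proof: apply the isomorphism $\pi$ of Lemma~\ref{DiagonalIsomorphismLemma} to get $r\,1_{Z(\mu)}=r\,1_{Z(\nu)}$, deduce $Z(\mu)=Z(\nu)$ by evaluating at a point, and conclude by injectivity of $\pi$. Your explicit observation that this pointwise step requires $r\neq 0$ (so the lemma should be read with $r\in R\setminus\{0\}$, which is how it is invoked in Lemmas~\ref{Diagonal2Lemma} and~\ref{Diagonal3Lemma}) is a correct and worthwhile clarification of a hypothesis the paper leaves implicit.
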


\begin{proof} Let $\pi: \mathcal{D} \rightarrow A_{\mathcal{D}}$. Then $\pi(rs_\mu s_{\mu^*})= \pi(rs_\nu
s_{\nu^*})$. Therefore we have $r 1_{Z(\mu)} = r 1_{Z(\nu)}$, which implies $Z(\mu) = Z(\nu)$. This means $s_\mu s_{\mu^*} = s_\nu s_{\nu^*}$ because the map $\pi$ of Lemma \ref{DiagonalIsomorphismLemma} is injective.
\end{proof}

\section{The cycline subalgebra}

We study a special class of generators for ${\rm KP}_R(\L)$ which will be used in the construction of a
distinguished subalgebra, called \emph{the cycline subalgebra} of ${\rm KP}_R(\L)$. First an element $a \in {\rm KP}_R(\L)$ is said to be \emph{normal} if $aa^*=a^*a$.
The proof of the following proposition follows exactly as the one given in
\cite[Proposition 4.1]{BNR}.  Note that in \cite[Proposition~4.1]{BNR}, they write
$P_{\alpha}$ for the projection $s_\alpha s_{\alpha^*}$.

\begin{proposition}\label{Cyclinepairs}
Let $\L$ be a row-finite
$k$-graph with no sources and $R$ be a commutative ring with $1$.
Then for $(\alpha,\beta) \in \L \times \L$
with $s(\alpha)=s(\beta)$, the following conditions are equivalent:
\begin{enumerate}
\item \label{it1:cyclinepairs} $s_{\alpha\gamma}s_{(\alpha\gamma)^*}=s_{\beta\gamma}s_{(\beta\gamma)^*}$ for all $\gamma \in s(\alpha)\L$;
\item \label{it2:cyclinepairs} $s_\alpha s_{\beta^*}$ is normal and commutes with $\mathcal{D}$;  and
\item \label{it3:cyclinepairs} $\alpha\gamma=\beta\gamma$ for all $\gamma \in s(\alpha)\L^{\infty}$.
\end{enumerate}
\end{proposition}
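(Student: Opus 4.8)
The plan is to prove the cycle of implications $(1)\Rightarrow(3)\Rightarrow(2)\Rightarrow(1)$, using the identification $\mathcal{D}\cong A_{\mathcal{D}}$ from Lemma \ref{DiagonalIsomorphismLemma} to translate statements about the diagonal idempotents $s_{\alpha\gamma}s_{(\alpha\gamma)^*}$ into statements about cylinder sets, and using Remark \ref{remark:Z} to carry out the computations that show $s_\alpha s_{\beta^*}$ commutes with the generators $s_\mu s_{\mu^*}$ of $\mathcal{D}$.

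For $(1)\Rightarrow(3)$: suppose $s_{\alpha\gamma}s_{(\alpha\gamma)^*}=s_{\beta\gamma}s_{(\beta\gamma)^*}$ for all $\gamma\in s(\alpha)\L$. Applying $\pi$ gives $1_{Z(\alpha\gamma)}=1_{Z(\beta\gamma)}$, hence $Z(\alpha\gamma)=Z(\beta\gamma)$ for all such finite $\gamma$. Now fix $\gamma\in s(\alpha)\L^{\infty}$ and set $\mu_n=\gamma(0,n)$ for increasing $n\in\N^k$; then $\alpha\gamma\in Z(\alpha\mu_n)=Z(\beta\mu_n)$ for every $n$, so $(\alpha\gamma)(0,d(\beta\mu_n))=\beta\mu_n$, which forces $d(\alpha)=d(\beta)$ once $n$ is large enough in every coordinate (compare degrees, or note both lie in the same cylinder and $d(\alpha)\le d(\beta\mu_n)$, and symmetrically), and then $\alpha\gamma$ and $\beta\gamma$ agree on every initial segment, so $\alpha\gamma=\beta\gamma$ by uniqueness of infinite paths. (One must first check $d(\alpha)=d(\beta)$: if $d(\alpha)\ne d(\beta)$ then $Z(\alpha)$ and $Z(\beta)$ cannot coincide unless they are both empty, contradicting $s_\alpha s_{\alpha^*}\ne 0$; taking $\gamma = s(\alpha)$ in (1) already gives $Z(\alpha)=Z(\beta)$, hence $d(\alpha)=d(\beta)$ and indeed $\alpha=\beta$ on $Z(s(\alpha))$ as finite paths, so in fact $\alpha=\beta$ — wait, that would trivialize things, so the content is really that $\alpha\gamma=\beta\gamma$ on \emph{infinite} extensions, and the degree match comes for free.)

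For $(3)\Rightarrow(1)$: if $\alpha\gamma=\beta\gamma$ for all $\gamma\in s(\alpha)\L^{\infty}$, then in particular $d(\alpha)=d(\beta)$, and for any finite $\eta\in s(\alpha)\L$ and any $x\in Z(\alpha\eta)$ we can write $x=\alpha\eta y$ for $y\in Z(s(\eta))$; then $x=\alpha(\eta y)=\beta(\eta y)$, so $x\in Z(\beta\eta)$, giving $Z(\alpha\eta)\subseteq Z(\beta\eta)$ and by symmetry equality. Applying $\pi^{-1}$ yields $s_{\alpha\eta}s_{(\alpha\eta)^*}=s_{\beta\eta}s_{(\beta\eta)^*}$, which is (1). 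For $(1)\Rightarrow(2)$: expand a generic generator $s_\mu s_{\mu^*}$ of $\mathcal{D}$ using (KP4) to refine to degree $d(\mu)\vee d(\alpha)$, write $s_\mu s_{\mu^*}=\sum_{\mu' } s_{\mu'}s_{\mu'^*}$ over extensions $\mu'$ of $\mu$ of the refined degree, and use Remark \ref{remark:Z} together with the hypothesis $s_{\alpha\gamma}s_{(\alpha\gamma)^*}=s_{\beta\gamma}s_{(\beta\gamma)^*}$ to compute both $(s_\alpha s_{\beta^*})(s_\mu s_{\mu^*})$ and $(s_\mu s_{\mu^*})(s_\alpha s_{\beta^*})$ and see they agree; taking $\mu=\alpha$ and $\mu=\beta$ also yields $s_\alpha s_{\beta^*}(s_\alpha s_{\beta^*})^* = s_\alpha s_{\beta^*} s_\beta s_{\alpha^*} = s_\alpha s_{\alpha^*}$ and symmetrically $(s_\alpha s_{\beta^*})^* s_\alpha s_{\beta^*} = s_\beta s_{\beta^*}$, which are equal by (1) with $\gamma=s(\alpha)$, so $s_\alpha s_{\beta^*}$ is normal. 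Finally $(2)\Rightarrow(1)$: from $s_\alpha s_{\beta^*} s_\beta s_{\alpha^*}=s_\beta s_{\alpha^*} s_\alpha s_{\beta^*}$ (normality) we get $s_\alpha s_{\alpha^*}=s_\beta s_{\beta^*}$ after using (KP3); then commuting $s_\alpha s_{\beta^*}$ past $s_{\gamma}s_{\gamma^*}\in\mathcal{D}$ (for $\gamma\in s(\alpha)\L$) and multiplying on the left by $s_\alpha s_{\alpha^*}$ and on the right by $s_\beta s_{\beta^*}$, then applying the computation of Remark \ref{remark:Z}, isolates $s_{\alpha\gamma}s_{(\alpha\gamma)^*}=s_{\beta\gamma}s_{(\beta\gamma)^*}$.

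The main obstacle I expect is bookkeeping in the $(1)\Leftrightarrow(2)$ steps: one has to refine the diagonal generator $s_\mu s_{\mu^*}$ to a common degree $\ge d(\alpha)$ so that Remark \ref{remark:Z} applies cleanly, and then track which summands survive the orthogonality relations (KP3) — the hypothesis (1) is exactly what makes the surviving terms on the two sides match up. Everything else is a routine translation through the isomorphism $\pi$ of Lemma \ref{DiagonalIsomorphismLemma}; since the paper explicitly says "the proof follows exactly as the one given in \cite[Proposition 4.1]{BNR}", I would organize it to mirror that argument, with the cylinder-set picture of $A_{\mathcal{D}}$ playing the role that projections onto cylinder sets play in the $C^*$-setting.
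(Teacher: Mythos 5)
The paper gives no argument of its own here (it defers verbatim to \cite[Proposition 4.1]{BNR}), so the only question is whether your sketch is sound; it is not, because of the degree claim you insert into $(1)\Rightarrow(3)$. You assert that taking $\gamma=s(\alpha)$ in (1) gives $Z(\alpha)=Z(\beta)$, hence $d(\alpha)=d(\beta)$ and even $\alpha=\beta$, and then you try to wave this away with ``the degree match comes for free.'' Both claims are false, and they cannot come for free: the whole point of cycline pairs is that they include pairs with $d(\alpha)\neq d(\beta)$. Take the $1$-graph with a single vertex $v$ and a single loop $e$ (or, in general, $\beta=\alpha c$ with $c$ a cycle without entry, exactly the situation recorded in the paper's remark identifying $\mathcal{M}$ with the commutative core of \cite{GN}): then $\Lambda^{\infty}$ is a single point, $Z(v)=Z(e)$, and $(v,e)$ satisfies all three conditions with $d(v)=0\neq 1=d(e)$. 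So $Z(\alpha)=Z(\beta)$ does not force $\alpha=\beta$, and your worry ``wait, that would trivialize things'' was pointing at a real error in your reasoning, not a removable aside. The source of the confusion is what equality in (3) means: $\alpha x=\beta x$ is equality of infinite paths as degree-preserving functors $\Omega_k\to\Lambda$, not equality of the chosen factorizations, so no matching of $d(\alpha)$ with $d(\beta)$ is needed or available.

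The repair is short and is already latent in your own computation: from $Z(\alpha\,x(0,n))=Z(\beta\,x(0,n))$ for all $n$ you get $(\alpha x)(0,d(\beta)+n)=\beta\,x(0,n)=(\beta x)(0,d(\beta)+n)$ for every $n\in\N^k$, and since these initial segments exhaust both paths, $\alpha x=\beta x$; the false degree step should simply be deleted (the same false assertion ``in particular $d(\alpha)=d(\beta)$'' opens your $(3)\Rightarrow(1)$ argument, though there it is unused and the rest of that direction, writing $x=\alpha\eta y=\beta\eta y$, is fine). Your $(1)\Leftrightarrow(2)$ outline is essentially the computation of \cite[Proposition 4.1]{BNR} transported through Remark \ref{remark:Z} and (KP3)/(KP4); it is plausible but, as you note, the bookkeeping (matching the surviving extensions of $\mu$ through $\alpha$ with those through $\beta$ via $Z(\alpha\gamma)=Z(\beta\gamma)$) is exactly where the hypothesis is used and would need to be written out carefully, again without assuming the two sides have prefixes of equal degree.
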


\begin{definition} A pair $(\alpha,\beta) \in \L \times \L$ with $s(\alpha)=s(\beta)$
satisfying the equivalent conditions of Proposition \ref{Cyclinepairs} is called a \emph{cycline pair}.
Define \[\mathcal{M} := \ \langle s_\alpha s_{\beta^*}: (\alpha,\beta) \text{ cycline}  \rangle; \]
that is, the $R$-subalgebra of ${\rm KP}_R(\L)$ generated by the cycline pairs.
We call $\mathcal{M}$ the \emph{cycline subalgebra} of the Kumjian-Pask algebra ${\rm KP}_R(\L)$.
\end{definition}

\begin{remark}\label{RemarkCycline} Observe that $(\alpha,\alpha)$ is cycline and hence $\mathcal{D} \subseteq \mathcal{M}$.
Also $(\alpha, \beta)$ is cycline if and only if $(\beta,\alpha)$ is. For $(\alpha, \beta), (\mu, \nu) \in \L \times \L$ and $n \in \N^k$, $n \geq d(\alpha) \vee d(\beta)$ we have
\begin{equation}\label{eq:cyc}s_\alpha s_{\beta^*} s_\mu s_{\nu^*} = \sum_{\gamma, \eta \in \L,
 \; \beta\gamma=\mu\eta, \; d(\beta\gamma)=n}s_{\alpha\gamma}s_{(\nu\eta)^*}.\end{equation}
If $(\alpha, \beta), (\mu, \nu) $ are cycline and $\lambda \in s(\gamma)\L$, then
$$s_{\nu\eta\lambda}s_{(\nu\eta\lambda)^*}=s_{\mu\eta\lambda}s_{(\mu\eta\lambda)^*}=s_{\beta\gamma\lambda}s_{(\beta\gamma\lambda)^*}=
s_{\alpha\gamma\lambda}s_{(\alpha\gamma\lambda)^*}.$$
This means that the pairs $(\alpha\gamma,\nu\eta)$ appearing in the right hand side of
\eqref{eq:cyc} are cycline too  by Proposition \ref{Cyclinepairs} \eqref{it1:cyclinepairs}.
\end{remark}

\begin{lemma}
\label{lem:Mcom}The cycline subalgebra $\mathcal{M}$ is commutative.
\end{lemma}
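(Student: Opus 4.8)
The plan is to show that $\mathcal{M}$ is spanned as an $R$-module by the elements $s_\alpha s_{\beta^*}$ with $(\alpha,\beta)$ cycline, and that any two such generators commute; commutativity of $\mathcal{M}$ then follows by bilinearity of multiplication. For the spanning claim, I would argue that the set $\{s_\alpha s_{\beta^*}:(\alpha,\beta)\text{ cycline}\}$ is closed under multiplication up to $R$-linear combinations. This is essentially already done in Remark~\ref{RemarkCycline}: formula~\eqref{eq:cyc} expresses a product $s_\alpha s_{\beta^*}s_\mu s_{\nu^*}$ of two cycline generators as a finite $R$-linear combination $\sum s_{\alpha\gamma}s_{(\nu\eta)^*}$, and the remark observes that each pair $(\alpha\gamma,\nu\eta)$ occurring there is again cycline (via Proposition~\ref{Cyclinepairs}\eqref{it1:cyclinepairs}). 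Hence products of generators lie in the $R$-span of cycline generators, and so $\mathcal{M}=\operatorname{span}_R\{s_\alpha s_{\beta^*}:(\alpha,\beta)\text{ cycline}\}$.

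Given the spanning description, it suffices to prove that for any two cycline pairs $(\alpha,\beta)$ and $(\mu,\nu)$ we have $(s_\alpha s_{\beta^*})(s_\mu s_{\nu^*}) = (s_\mu s_{\nu^*})(s_\alpha s_{\beta^*})$. Here is where I would exploit condition~\eqref{it2:cyclinepairs} of Proposition~\ref{Cyclinepairs}: each cycline generator is normal and commutes with $\mathcal{D}$. In particular $s_\mu s_{\nu^*}$ commutes with the diagonal element $s_\mu s_{\mu^*} = p_{s(\mu)}\cdots$; more to the point, I would write $s_\alpha s_{\beta^*} = (s_\alpha s_{\alpha^*}) (s_\alpha s_{\beta^*}) = (s_\alpha s_{\beta^*})(s_\beta s_{\beta^*})$ and similarly for $s_\mu s_{\nu^*}$, and then repeatedly slide the diagonal projections through the other cycline generator using~\eqref{it2:cyclinepairs}, reducing the computation to manipulations of diagonal elements (which commute by Lemma~\ref{lem:Mcom}'s predecessor, the commutativity of $\mathcal{D}$) together with the defining relations~(KP1)--(KP4).

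Concretely, the cleanest route is probably: use~\eqref{eq:cyc} to compute both $(s_\alpha s_{\beta^*})(s_\mu s_{\nu^*})$ and $(s_\mu s_{\nu^*})(s_\alpha s_{\beta^*})$ as explicit sums indexed by pairs $(\gamma,\eta)$ with $\beta\gamma=\mu\eta$ (respectively $\nu\eta'=\alpha\gamma'$) of a fixed large degree $n \ge d(\alpha)\vee d(\beta)\vee d(\mu)\vee d(\nu)$, and then set up a bijection between the two index sets that matches the corresponding terms. The matching uses condition~\eqref{it3:cyclinepairs}: since $\alpha\zeta=\beta\zeta$ and $\mu\zeta=\nu\zeta$ for all infinite paths $\zeta$, the common extensions $\beta\gamma=\mu\eta$ and $\nu\eta=\alpha\gamma$ (at finite level, for $\gamma,\eta$ of appropriate degree) are forced to coincide, so the two sums are literally equal term by term.

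**Main obstacle.** The bookkeeping in the term-by-term comparison is the only real difficulty: one must choose the truncation degree $n$ large enough that condition~\eqref{it3:cyclinepairs} can be applied at the finite level (i.e. so that agreement of finite paths of degree $n$ already determines agreement of all longer common extensions), and one must check that the index sets $\{(\gamma,\eta):\beta\gamma=\mu\eta,\ d(\beta\gamma)=n\}$ for the two orders of multiplication are carried to each other by swapping roles, using $\alpha\gamma=\beta\gamma$ and $\mu\eta=\nu\eta$. None of this is conceptually deep — it is the same style of computation as in the proof that $\mathcal{D}$ is commutative — but it requires care with the factorization property. An alternative that sidesteps the indexing is to quotient: since $\mathcal{M}$ is generated by normal elements commuting with $\mathcal{D}$, and any cycline generator $s_\mu s_{\nu^*}$ satisfies $s_\mu s_{\nu^*}\,\mathcal{D} = \mathcal{D}\, s_\mu s_{\nu^*}$, one shows that conjugation-type expressions collapse onto $\mathcal{D}$; but I expect the direct comparison via~\eqref{eq:cyc} and~\eqref{it3:cyclinepairs} to be shortest.
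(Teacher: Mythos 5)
Your reduction to checking that two cycline generators commute is fine (in fact you do not even need the spanning claim: a subalgebra generated by pairwise commuting elements is automatically commutative), and expanding both products via \eqref{eq:cyc} is also how the paper starts. The gap is in the step you dismiss as bookkeeping, which is where all the content lies. You claim that condition \eqref{it3:cyclinepairs} of Proposition~\ref{Cyclinepairs} forces the finite common extensions to coincide, so that the two expansions agree ``literally term by term.'' But \eqref{it3:cyclinepairs} is a statement about infinite paths; it never identifies finite paths of different degrees, and the interesting cycline pairs have $d(\alpha)\neq d(\beta)$. Concretely, in $s_\alpha s_{\beta^*}s_\mu s_{\nu^*}=\sum_{\beta\gamma=\mu\eta,\ d(\beta\gamma)=n}s_{\alpha\gamma}s_{(\nu\eta)^*}$ one has $d(\alpha\gamma)-d(\nu\eta)=(d(\alpha)-d(\beta))-(d(\nu)-d(\mu))$, which is independent of $n$, so no choice of truncation level makes the pairs $(\gamma,\eta)$ indexing the two products line up the way you describe. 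For an explicit failure take the $1$-graph with one vertex $v$ and one loop $e$ (so ${\rm KP}_R(\Lambda)\cong R[x,x^{-1}]$) and the cycline pairs $(\alpha,\beta)=(\mu,\nu)=(e,v)$: at level $n$ the only index is $\gamma=e^{n}$, $\eta=e^{n-1}$, and $\alpha\gamma=e^{n+1}\neq e^{n-1}=\nu\eta$ for every $n$; the terms $s_{e^{n+1}}s_{(e^{n-1})^*}$ are neither diagonal nor matched to the terms of the reversed product by your rule.

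So the proposal does not close as written: the equality of the two expansions is exactly what must be proved, and it requires using the cycline hypothesis in earnest rather than only as an equality of infinite paths. This is how the paper argues: after expanding via \eqref{eq:cyc} it inserts $s_{(\alpha\gamma)^*}s_{\nu\eta}$ using (KP3) so that each surviving term becomes a product of diagonal idempotents $(s_{\alpha\gamma}s_{(\alpha\gamma)^*})(s_{\nu\eta}s_{(\nu\eta)^*})$, replaces these by $(s_{\beta\gamma}s_{(\beta\gamma)^*})(s_{\mu\eta}s_{(\mu\eta)^*})$ using condition \eqref{it1:cyclinepairs}, commutes them using commutativity of $\mathcal{D}$, and then reassembles $s_\mu s_{\nu^*}s_\alpha s_{\beta^*}$. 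Your alternative route via condition \eqref{it2:cyclinepairs} (``slide diagonal projections through'') points in the same direction but is left too vague to verify: commuting $s_\alpha s_{\alpha^*}$ and $s_\beta s_{\beta^*}$ past the other generator does not by itself produce the reversed product, and you would still need the term-level analysis above. (Also a small slip: $s_\mu s_{\mu^*}$ is not $p_{s(\mu)}$; that is $s_{\mu^*}s_\mu$.)
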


\begin{proof} Let $(\alpha,\beta)$ and $(\mu,\nu)$ be two cycline pairs. It suffices to show that
$$ (s_\alpha s_{\beta^*}) (s_\mu s_{\nu^*}) = (s_\mu s_{\nu^*}) (s_\alpha s_{\beta^*}).$$
Now
\begin{align*}
 s_\alpha s_{\beta^*} \  s_\mu s_{\nu^*} &= \sum_{\beta\gamma=\mu\eta}s_{\alpha\gamma}s_{(\nu\eta)^*} \ \\
& = \sum_{\beta\gamma=\mu\eta, \ \nu\eta=\alpha\gamma}s_{\alpha\gamma} \ (s_{(\alpha\gamma)^*}s_{\nu\eta}) \ s_{(\nu\eta)^*} \text{ by (KP3)} \\
& = \sum_{\beta\gamma=\mu\eta, \ \nu\eta=\alpha\gamma}(s_{\alpha\gamma}s_{(\alpha\gamma)^*}) \ (s_{\nu\eta}s_{(\nu\eta)^*}) \ \\
& = \sum_{\beta\gamma=\mu\eta, \ \nu\eta=\alpha\gamma}(s_{\beta\gamma}s_{(\beta\gamma)^*}) \ (s_{\mu\eta}s_{(\mu\eta)^*}) \ \text{ since $(\alpha,\beta)$
and $(\mu,\nu)$ are cycline} \\
& = \sum_{\beta\gamma=\mu\eta, \ \nu\eta=\alpha\gamma} (s_{\mu\eta}s_{(\mu\eta)^*}) \ (s_{\beta\gamma}s_{(\beta\gamma)^*}) \text{ since $\mathcal{D}$ is
commutative} \\
& = \sum_{\beta\gamma=\mu\eta, \ \nu\eta=\alpha\gamma} s_{\mu\eta} \ (s_{(\mu\eta)^*}s_{\beta\gamma}) \ s_{(\beta\gamma)^*}  \ \\
\end{align*}
\begin{align*}
& = \sum_{\nu\eta=\alpha\gamma} s_{\mu\eta} s_{(\beta\gamma)^*} \\
& = s_\mu s_{\nu^*} \ s_\alpha s_{\beta^*} \  \text{as desired.}\qedhere
\end{align*}
\end{proof}

\begin{remark} Let $E$ be a $1$-graph. In \cite[Example 4.9]{BNR}, it is seen that a standard generator $s_\alpha s_{\beta^*}$ is cycline if and only if
$\alpha=\beta$, $\alpha=\beta c$ or $\beta=\alpha c$ for some cycle $c \in s(\alpha)E s(\alpha)$ without entry. Thus the cycline subalgebra $\mathcal{M}$
coincides with \emph{the commutative core} $M_R(E)$ defined in \cite[Definition 3.15]{GN}.
\end{remark}

Define $\mathcal{D'}$ such that
\begin{equation}\label{Dprime}\mathcal{D'} := \{a \in {\rm KP}_R(\Lambda): ad=da \text{ for every } d \in \mathcal{D}\}.\end{equation}
Notice that we have the inclusions $\mathcal{D} \subseteq \mathcal{M} \subseteq \mathcal{D'}$, where the
second inclusion comes from Proposition~\ref{Cyclinepairs} \eqref{it2:cyclinepairs}.
In fact, $\mathcal{M}=\mathcal{D}$ if $\L$ is aperiodic (see Lemma \ref{lem:aperiodic}).
Our main goal in this section is to prove the following.

\begin{theorem}\label{Mismaximalcommutative}Let $\L$ be a row-finite
$k$-graph with no sources, $R$ be a commutative ring with $1$,
$\mathcal{M}$ be the cycline subalgebra and
$\mathcal{D'}$ be defined as in \eqref{Dprime}. Then
$\mathcal{M}=\mathcal{D'}$. In particular, $\mathcal{M}$ is a maximal commutative subalgebra of ${\rm KP}_R(\L)$.
\end{theorem}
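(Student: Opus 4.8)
The plan is to prove the two inclusions $\mathcal{M}\subseteq\mathcal{D'}$ and $\mathcal{D'}\subseteq\mathcal{M}$ separately; the first is immediate (already noted in the text, via Proposition~\ref{Cyclinepairs}\eqref{it2:cyclinepairs} together with Lemma~\ref{lem:Mcom}), so the entire content lies in showing $\mathcal{D'}\subseteq\mathcal{M}$. So let $a\in\mathcal{D'}$ be nonzero, and fix a normal form $a=\sum_{(\alpha,\beta)\in F}r_{\alpha,\beta}\,s_\alpha s_{\beta^*}$ with $F\subseteq\L\times\L^m$ finite, $s(\alpha)=s(\beta)$ for all $(\alpha,\beta)\in F$, $m$ large enough (in particular we may arrange $m\ge d(\alpha)$ for all $\alpha$ appearing), and $r_{\alpha,\beta}\in R\setminus\{0\}$. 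The goal is to show that \emph{every} pair $(\alpha,\beta)\in F$ is a cycline pair, from which $a\in\mathcal{M}$ follows immediately since $\mathcal{M}$ is spanned by the $s_\alpha s_{\beta^*}$ with $(\alpha,\beta)$ cycline.

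The mechanism is to test the commutation relation $a\,d=d\,a$ against well-chosen diagonal elements $d$ and extract information using the normal form's uniqueness. First I would note that for a vertex $v$, conjugating or multiplying by $p_v=s_vs_{v^*}\in\mathcal{D}$ lets us isolate, for each fixed source vertex, the sub-sum of terms with $s(\alpha)=s(\beta)=v$; so we may assume all pairs in $F$ share a common source $v$. Next, the key computation: for $\mu\in\L$ with $r(\mu)=v$ (so $s_\mu s_{\mu^*}\in\mathcal{D}$ with $s(\mu)$ ``downstream'' of $v$), use Remark~\ref{remark:Z} to compute both $s_\mu s_{\mu^*}\cdot s_\alpha s_{\beta^*}$ and $s_\alpha s_{\beta^*}\cdot s_\mu s_{\mu^*}$. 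Actually the cleaner choice is to take $d=s_{\alpha_0\gamma}s_{(\alpha_0\gamma)^*}$ for a fixed target pair $(\alpha_0,\beta_0)\in F$ and an arbitrary $\gamma\in s(\alpha_0)\L^n$ with $n$ large; multiplying $a$ on the left by $d$ kills every term $s_\alpha s_{\beta^*}$ whose $\alpha$ is not comparable to $\alpha_0\gamma$, and similarly multiplying on the right by $s_{\beta_0\gamma}s_{(\beta_0\gamma)^*}$ controls the $\beta$'s. Comparing $d\,a$ with $a\,d$ and invoking the uniqueness of the normal form term-by-term — crucially using Lemma~\ref{Diagonal1Prop} to upgrade equalities of the form $r\,s_\mu s_{\mu^*}=r\,s_\nu s_{\nu^*}$ to $Z(\mu)=Z(\nu)$ despite $R$ possibly having zero divisors — should force $s_{\alpha_0\gamma}s_{(\alpha_0\gamma)^*}=s_{\beta_0\gamma}s_{(\beta_0\gamma)^*}$ for all such $\gamma$, which is exactly condition~\eqref{it1:cyclinepairs} of Proposition~\ref{Cyclinepairs}, i.e. $(\alpha_0,\beta_0)$ is cycline.

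The main obstacle is the bookkeeping in that comparison step: one must carefully track which terms of the normal form survive multiplication by the test element $d$, rule out unwanted collisions between distinct terms of $F$ (handled by choosing $d$ adapted to a single pair and using that $F$ is finite, so $n$ can be taken larger than all degrees in $F$ and the $\gamma$'s chosen to separate ranges), and then match coefficients. The possible presence of zero divisors in $R$ means one cannot simply ``divide by $r_{\alpha,\beta}$'', and this is precisely where Lemma~\ref{Diagonal1Prop} (equivalently the injectivity of $\pi$ from Lemma~\ref{DiagonalIsomorphismLemma}) does the essential work. Once $\mathcal{M}=\mathcal{D'}$ is established, the ``maximal commutative'' assertion is formal: $\mathcal{M}$ is commutative by Lemma~\ref{lem:Mcom} and contains $\mathcal{D}$, so any commutative subalgebra $\mathcal{N}\supseteq\mathcal{M}$ consists of elements commuting with $\mathcal{D}$, whence $\mathcal{N}\subseteq\mathcal{D'}=\mathcal{M}$, giving $\mathcal{N}=\mathcal{M}$.
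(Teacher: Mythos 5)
Your overall skeleton (reduce to showing every pair in a normal form of $a\in\mathcal{D'}$ is cycline, use Lemma~\ref{Diagonal1Prop} to cope with zero divisors, and finish maximality formally) matches the paper, and your treatment of $\mathcal{M}\subseteq\mathcal{D'}$ and of the final maximality step is fine. But the heart of the argument is left as a ``should'', and the mechanism you propose in its place has a genuine gap. The paper isolates individual terms via the $\mathbb{Z}^k$-grading (Lemma~\ref{NormalLemma}): since $\mathcal{D}$ consists of degree-zero elements, $a\in\mathcal{D'}$ forces each homogeneous component $a_n\in\mathcal{D'}$; in a homogeneous component written in normal form all the $\alpha$'s share one degree and all the $\beta$'s another, so sandwiching between $s_{\gamma}s_{\gamma^*}$ and $s_{\delta}s_{\delta^*}$ for $(\gamma,\delta)\in F$ yields, by (KP3), exactly the single term $r_{\gamma,\delta}s_{\gamma}s_{\delta^*}\in\mathcal{D'}$. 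Your substitute --- multiply by $s_{\alpha_0\gamma}s_{(\alpha_0\gamma)^*}$ with $\gamma$ long and ``chosen to separate'' --- fails in general: if $F$ contains both $(\alpha_0,\beta)$ and $(\alpha_0\tau,\beta)$ and every sufficiently long path in $s(\alpha_0)\L$ extends $\tau$ (for instance when $s(\alpha_0)$ lies on a cycle without entry, which is precisely the periodic situation where nontrivial cycline pairs exist), then no choice of $\gamma$ separates the two terms, and you are left with an unanalysed cancellation problem among distinct members of $F$. The grading argument is the missing idea that removes this. (A minor additional slip: multiplying by $p_v$ selects terms by the \emph{range} of $\alpha$ or $\beta$, not by their common source, so your preliminary reduction does not do what you claim; it is also unnecessary.)

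Even granting that you have isolated a single term $r\,s_{\mu}s_{\nu^*}\in\mathcal{D'}$ with $r\neq 0$, the passage to cyclinity is more than a normal-form comparison. For $d=s_{\mu\gamma}s_{(\mu\gamma)^*}$ one has $d\,(r s_{\mu}s_{\nu^*})=r\,s_{\mu\gamma}s_{(\nu\gamma)^*}$, but $(r s_{\mu}s_{\nu^*})\,d$ is a sum over common extensions of $\nu$ and $\mu\gamma$, so ``matching coefficients'' does not directly produce an identity of the form $r\,s_{\mu\gamma}s_{(\mu\gamma)^*}=r\,s_{\nu\gamma}s_{(\nu\gamma)^*}$ to which Lemma~\ref{Diagonal1Prop} could be applied. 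The paper gets there in two steps: first $s_{\mu}s_{\mu^*}=s_{\nu}s_{\nu^*}$ (Lemma~\ref{Diagonal2Lemma}), and then, for each $\gamma\in s(\mu)\L$, a comparison of $(rb)b^*$ with $b^*(rb)$ for $b=s_{\mu\gamma}s_{(\mu\gamma)^*}s_{\mu}s_{\nu^*}$, using commutation with $\mathcal{D}$ and the involution, which gives $r\,s_{\mu\gamma}s_{(\mu\gamma)^*}=r\,s_{\nu\gamma}s_{(\nu\gamma)^*}$ and hence, by Lemma~\ref{Diagonal1Prop} and Proposition~\ref{Cyclinepairs}\eqref{it1:cyclinepairs}, that $(\mu,\nu)$ is cycline (Lemma~\ref{Diagonal3Lemma}). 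These two ingredients --- the grading reduction and the involution argument for a single term --- are exactly what your proposal leaves unproved.
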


To prove Theorem~\ref{Mismaximalcommutative}, we establish three lemmas.
Recall that  ${\rm KP}_R(\Lambda)$ is a $\mathbb{Z}^k$-graded algebra such that
for $n \in \mathbb{Z}^k$,
\[{\rm KP}_R(\Lambda)_n=\text{span}_R\{s_{\lambda}s_{\mu^*}:\lambda,\mu \in \Lambda \text{ and }d(\lambda)-d(\mu)=n\}.\]

\begin{lemma}\label{NormalLemma} Let $a \in \mathcal{D'}$ and suppose $a = \sum_{(\alpha,\beta) \in F} r_{\alpha,\beta}s_{\alpha}s_{\beta^*}$ is in normal
form. Then for any $(\alpha,\beta) \in F$, $r_{\alpha,\beta}s_{\alpha}s_{\beta^*} \in \mathcal{D'}$.
\end{lemma}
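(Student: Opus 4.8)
The goal is to show that each homogeneous ``piece'' of a normal-form element of $\mathcal{D}'$ stays in $\mathcal{D}'$, so the natural tool is the $\mathbb{Z}^k$-grading of ${\rm KP}_R(\L)$. First I would group the terms of $a = \sum_{(\alpha,\beta) \in F} r_{\alpha,\beta}s_{\alpha}s_{\beta^*}$ by degree: writing $a = \sum_{n \in \mathbb{Z}^k} a_n$ with $a_n = \sum_{(\alpha,\beta)\in F,\ d(\alpha)-d(\beta)=n} r_{\alpha,\beta}s_{\alpha}s_{\beta^*} \in {\rm KP}_R(\L)_n$, it suffices to show each $a_n \in \mathcal{D}'$, because iterating the argument on the single-degree pieces (which have strictly fewer terms once we know the degree decomposition respects $\mathcal{D}'$) will eventually isolate each individual term $r_{\alpha,\beta}s_\alpha s_{\beta^*}$. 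So the crux is: if $a \in \mathcal{D}'$ then every graded component $a_n$ lies in $\mathcal{D}'$.

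The key observation is that $\mathcal{D}$ consists of homogeneous elements of degree $0$: indeed $\mathcal{D}$ is spanned by the $s_\mu s_{\mu^*}$, each of which lies in ${\rm KP}_R(\L)_0$. Hence for any $d \in \mathcal{D}$, left and right multiplication by $d$ are degree-preserving $R$-linear maps on ${\rm KP}_R(\L)$. Therefore, for $a \in \mathcal{D}'$ and $d \in \mathcal{D}$, the equation $ad = da$ decomposes degree by degree: the degree-$n$ component of $ad$ is $a_n d$ and the degree-$n$ component of $da$ is $d a_n$, so $a_n d = d a_n$ for every $n$ and every $d \in \mathcal{D}$. That is exactly the statement that $a_n \in \mathcal{D}'$. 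This handles the passage from $a$ to its graded pieces.

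To finish, I would run an induction on the cardinality of $F$. If $F$ has one element there is nothing to prove. Otherwise, since $\Lambda$ is row-finite with no sources, the normal form can be taken so that all $\beta$ (equivalently all $\alpha$) have a common degree $m$ (this is part of the normal-form statement recalled from \cite[Lemma 4.2]{ACaHR}); but even without that, the degree $d(\alpha) - d(\beta)$ of the term $s_\alpha s_{\beta^*}$ is well-defined, and if all terms of $a$ share the same degree $n$, I need a separate argument. In that case I would use the normal form more carefully: fix a term $(\alpha_0,\beta_0) \in F$ and multiply $a$ on the left by $s_{\alpha_0}s_{\alpha_0^*} \in \mathcal{D}$ and on the right by $s_{\beta_0}s_{\beta_0^*} \in \mathcal{D}$. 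Using (KP3) and the orthogonality relations for paths of a fixed degree (after refining to a common degree $m \ge d(\alpha_0)\vee d(\beta_0)$ via \cite[Lemma 3.3]{ACaHR}), the product $s_{\alpha_0}s_{\alpha_0^*} \, a \, s_{\beta_0}s_{\beta_0^*}$ picks out a sub-sum of $a$ supported on those $(\alpha,\beta)$ with $Z(\alpha) \subseteq Z(\alpha_0)$ and $Z(\beta) \subseteq Z(\beta_0)$; since both multipliers are in $\mathcal{D}$ and $a \in \mathcal{D}'$, this product is again in $\mathcal{D}'$, and it has strictly fewer terms (or provides a splitting $a = a' + a''$ with $a', a'' \in \mathcal{D}'$ each with fewer terms) unless $F$ was a single term to begin with. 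Applying the inductive hypothesis to the pieces gives each $r_{\alpha,\beta}s_\alpha s_{\beta^*} \in \mathcal{D}'$.

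\textbf{Main obstacle.} The genuinely delicate point is the single-degree case: the grading argument collapses there, and one must exploit the $\mathcal{D}$-bimodule structure together with the disjointness properties of cylinder sets (via Lemma \ref{DiagonalIsomorphismLemma} identifying $\mathcal{D}$ with $A_{\mathcal{D}}$) to separate the terms. One has to check that multiplying by the diagonal idempotents $s_{\alpha_0}s_{\alpha_0^*}$ and $s_{\beta_0}s_{\beta_0^*}$ really does split $a$ into strictly smaller pieces living in $\mathcal{D}'$, which requires knowing that distinct terms in a normal form have $\alpha$'s (and $\beta$'s) that are genuinely distinguished by their cylinder sets after passing to a common degree — precisely the content that makes the normal form ``normal''. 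Once that separation is in hand, the induction closes routinely.
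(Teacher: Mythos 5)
Your proposal is correct and follows essentially the same route as the paper: reduce to homogeneous components using that $\mathcal{D}$ sits in the degree-zero part of the $\mathbb{Z}^k$-grading, then multiply on the left and right by the diagonal idempotents $s_{\alpha_0}s_{\alpha_0^*}$, $s_{\beta_0}s_{\beta_0^*}$ and apply (KP3), noting that $d_1ad_2\in\mathcal{D'}$ for $d_1,d_2\in\mathcal{D}$. The only difference is that your induction on $|F|$, the degree refinement via \cite[Lemma 3.3]{ACaHR} and the cylinder-set separation are unnecessary: the normal form already forces all $\beta$'s to share a degree, hence within a fixed graded component all $\alpha$'s do too, so the corner multiplication isolates the single term $r_{\alpha_0,\beta_0}s_{\alpha_0}s_{\beta_0^*}$ in one step, exactly as in the paper.
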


\begin{proof} First we claim that $a \in \mathcal{D'}$ if and only if $a_n \in \mathcal{D'}$ for all $n \in
\mathbb{Z}^k$, where $a_n$ is the homogeneous part of $a$ of degree $n$. To prove the claim note if for all $n \in \mathbb{Z}^k$, $a_n \in \mathcal{D'}$ then $a \in \mathcal{D'}$. Assume that $a \in \mathcal{D'}$. Then for any $d \in
\mathcal{D}$, $ad=da$. Since elements of $\mathcal{D}$ are homogeneous of degree zero then $a_n d = {(ad)}_n$. Then
$$a_n d = {(ad)}_n = {(da)}_n = d a_n,$$
and so $a_n \in \mathcal{D'}$ for every $n \in \mathbb{Z}^k$.

Thus to prove the lemma, we can assume that $a$ is homogeneous of degree $n$. Since all $\beta$ have the same degree, then all
$\alpha$ have the same degree. For every $(\gamma,\delta) \in F$, $s_{\gamma}s_{\gamma^*} \in \mathcal{D}$ and $s_{\delta}s_{\delta^*} \in \mathcal{D}$ so
then $s_{\gamma}s_{\gamma^*} a s_{\delta}s_{\delta^*} \in \mathcal{D'}$. By (KP3) we have, $s_{\gamma}s_{\gamma^*} a s_{\delta}s_{\delta^*} =
r_{\gamma,\delta} s_\gamma s_{\delta^*} \in \mathcal{D'}$.
 \end{proof}

In the following lemma we adopt some ideas from \cite[Lemma 3.2]{Y}.

\begin{lemma}\label{Diagonal2Lemma} Let $\mu, \nu \in \L$ with $s(\mu)=s(\nu)$ and $r \in R \setminus \{0\}$ such that $r s_\mu s_{\nu^*} \in
\mathcal{D'}$. Then $s_\mu s_{\mu^*} = s_\nu s_{\nu^*}$.
\end{lemma}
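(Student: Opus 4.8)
The plan is to exploit the fact that $r s_\mu s_{\nu^*}$ commutes with every diagonal idempotent, and to squeeze it with well-chosen projections so that the commutation forces $Z(\mu)$ and $Z(\nu)$ to coincide. First I would record the basic computation $(s_\mu s_{\mu^*})(s_\mu s_{\nu^*}) = s_\mu s_{\nu^*}$ and $(s_\mu s_{\nu^*})(s_\mu s_{\mu^*}) = s_\mu (s_{\nu^*} s_\mu) s_{\mu^*}$, and similarly with $s_\nu s_{\nu^*}$ on the other side: $(s_\nu s_{\nu^*})(s_\mu s_{\nu^*}) = s_\nu (s_{\nu^*} s_\mu) s_{\nu^*}$ while $(s_\mu s_{\nu^*})(s_\nu s_{\nu^*}) = s_\mu s_{\nu^*}$. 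Using the hypothesis $r s_\mu s_{\nu^*} \in \mathcal{D'}$ (so it commutes with $s_\mu s_{\mu^*}$ and with $s_\nu s_{\nu^*}$), these give the two identities $r\, s_\mu s_{\nu^*} = r\, s_\mu (s_{\nu^*} s_\mu) s_{\mu^*}$ and $r\, s_\mu s_{\nu^*} = r\, s_\nu (s_{\nu^*} s_\mu) s_{\nu^*}$, hence $r\, s_\mu (s_{\nu^*}s_\mu) s_{\mu^*} = r\, s_\nu(s_{\nu^*}s_\mu) s_{\nu^*}$.

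Next I would expand $s_{\nu^*} s_\mu$ using \cite[Lemma 3.3]{ACaHR}: for $q \ge d(\mu)\vee d(\nu)$,
\[
s_{\nu^*} s_\mu = \sum_{\nu\delta = \mu\varepsilon,\; d(\mu\varepsilon)=q} s_\varepsilon s_{\delta^*}.
\]
Substituting and multiplying out, the left-hand side becomes $r\sum s_{\mu\varepsilon} s_{(\mu\delta)^*}$ and the right-hand side $r\sum s_{\nu\varepsilon} s_{(\nu\delta)^*}$, both sums over the same index set $\{(\delta,\varepsilon): \nu\delta=\mu\varepsilon,\ d(\mu\varepsilon)=q\}$. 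Pairing this with the fact that products of the form $s_{\mu\varepsilon}s_{(\mu\delta)^*}$ are linearly independent (they are standard spanning elements with distinct source/range data, cf.\ normal form), I would conclude that the index set is nonempty — which is where row-finiteness and no sources enter, guaranteeing $s(\mu)\L^{q-d(\mu)}\ne\emptyset$ and hence at least one summand — and more to the point that the "diagonal parts" match. Actually the cleanest route: multiply the displayed equality further by $s_{\mu^*}$ on the left and $s_\mu$ on the right (resp.\ by $s_{\nu^*}$, $s_\nu$) and use (KP3) to collapse everything to a diagonal identity $r\, s_{s(\mu)} = r\, (\text{something})$, then apply Lemma~\ref{Diagonal1Prop} to remove the scalar $r$.

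The most efficient version, and the one I would actually write, is: from $r\,s_\mu(s_{\nu^*}s_\mu)s_{\mu^*} = r\,s_\nu(s_{\nu^*}s_\mu)s_{\nu^*}$, apply $\pi$ of Lemma~\ref{DiagonalIsomorphismLemma}? — no, these need not lie in $\mathcal{D}$. So instead multiply on the left by $s_{\mu^*}$ and the right by $s_\mu$: the left side becomes $r\, s_{\nu^*}s_\mu s_{\nu^*} s_\mu = r(s_{\nu^*}s_\mu)(s_{\nu^*}s_\mu)$ living in the corner $p_{s(\mu)}\,\mathrm{KP}_R(\L)\,p_{s(\mu)}$, and comparing degrees (degree $d(\mu)-d(\nu)$ appears squared) one extracts the needed relation; the honest shortcut is to instead hit $r\,s_\mu s_{\nu^*} = r\,s_\nu(s_{\nu^*}s_\mu)s_{\nu^*}$ on the left by $s_{\nu^*}$ and right by $s_\nu$, getting $r\,s_{\nu^*}s_\mu s_{\nu^*}s_\nu = r\,(s_{\nu^*}s_\mu)(s_{\nu^*}s_\nu)$, i.e.\ $r\,s_{\nu^*}s_\mu = r\,s_{\nu^*}s_\mu s_{\nu^*} s_\mu$ — and symmetrically with $\mu$ — which after one more multiplication yields $r\,s_\mu s_{\mu^*} = r\,s_\nu s_{\nu^*}$, whence $s_\mu s_{\mu^*} = s_\nu s_{\nu^*}$ by Lemma~\ref{Diagonal1Prop}. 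The main obstacle is bookkeeping: keeping track of which index sets the sums from \cite[Lemma 3.3]{ACaHR} range over and verifying they genuinely agree on both sides, together with making sure the nonemptiness of those index sets (needed to avoid the vacuous $0=0$) is justified by row-finiteness and no sources; the algebra itself is just repeated use of (KP2)–(KP3).
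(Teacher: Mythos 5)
Your strategy is the right one and is essentially the paper's: use that $r s_\mu s_{\nu^*}$ commutes with the diagonal idempotents $s_\mu s_{\mu^*}$ and $s_\nu s_{\nu^*}$, collapse products with (KP2)--(KP3) to reach $r s_\mu s_{\mu^*}= r s_\nu s_{\nu^*}$, and then remove the scalar with Lemma~\ref{Diagonal1Prop}. Your two starting identities $r s_\mu s_{\nu^*}= r s_\mu(s_{\nu^*}s_\mu)s_{\mu^*}$ and $r s_\mu s_{\nu^*}= r s_\nu(s_{\nu^*}s_\mu)s_{\nu^*}$ are correct. However, the pivotal step of your ``honest shortcut'' fails as written: multiplying $r s_\mu s_{\nu^*}= r s_\nu s_{\nu^*}s_\mu s_{\nu^*}$ on the left by $s_{\nu^*}$ \emph{and} on the right by $s_\nu$ gives only a tautology, since $s_{\nu^*}s_\nu=p_{s(\nu)}$ makes both sides collapse to $r s_{\nu^*}s_\mu$. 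The identity you then claim to extract, $r s_{\nu^*}s_\mu = r s_{\nu^*}s_\mu s_{\nu^*}s_\mu$, does not follow and is false in general: for the $1$-graph with one vertex $v$ and one loop $e$, ${\rm KP}_R(\L)\cong R[x,x^{-1}]$ is commutative, so with $\mu=e$, $\nu=v$ we have $r s_\mu s_{\nu^*}=rx\in\mathcal{D'}$, while your identity would read $rx=rx^2$. Consequently the chain you describe never actually establishes $r s_\mu s_{\mu^*}=r s_\nu s_{\nu^*}$.

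The repair is short and recovers the paper's argument. From $r s_\mu s_{\nu^*}= r s_\nu s_{\nu^*}s_\mu s_{\nu^*}$ multiply only on the right, first by $s_\nu$ (using (KP3) to get $r s_\mu = r s_\nu s_{\nu^*}s_\mu$) and then by $s_{\mu^*}$, giving $r s_\mu s_{\mu^*}= r s_\nu s_{\nu^*}s_\mu s_{\mu^*}$; from $r s_\mu s_{\nu^*}= r s_\mu s_{\nu^*}s_\mu s_{\mu^*}$ multiply only on the left, by $s_{\mu^*}$ and then $s_\nu$, giving $r s_\nu s_{\nu^*}= r s_\nu s_{\nu^*}s_\mu s_{\mu^*}$; comparing yields $r s_\mu s_{\mu^*}= r s_\nu s_{\nu^*}$, and Lemma~\ref{Diagonal1Prop} finishes. (The paper obtains the second identity via the involution, from $r s_\nu s_{\mu^*}\in\mathcal{D'}$; your idea of commuting directly with $s_\nu s_{\nu^*}$ is an equally good route once executed correctly.) Finally, the detour through the expansion of $s_{\nu^*}s_\mu$ via \cite[Lemma 3.3]{ACaHR}, the purported linear independence of the elements $s_{\mu\varepsilon}s_{(\mu\delta)^*}$, and the nonemptiness of the index sets is unnecessary for this lemma, and the linear-independence claim is not justified (spanning elements of a Kumjian--Pask algebra need not be linearly independent); none of that machinery is needed once the multiplications above are done in the right order.
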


\begin{proof} First since $rs_\mu s_{\nu^*} \in \mathcal{D'}$ we have $r s_\mu s_{\mu^*} s_\mu s_{\nu^*} = r s_\mu s_{\nu^*} s_\mu s_{\mu^*}$, that is,  $r
s_\mu s_{\nu^*} =r s_\mu s_{\nu^*} s_\mu s_{\mu^*}$. Then multiplying both sides of this equation by $s_{\mu^*}$ on the left, we get $r s_{\mu^*} s_\mu
s_{\nu^*} = r s_{\mu^*} s_\mu s_{\nu^*} s_\mu s_{\mu^*}$, which means by (KP3) that $r s_{\nu^*}= r s_{\nu^*} s_\mu s_{\mu^*}$. Finally multiplying by
$s_{\nu}$  again on the left gives,
$$r s_\nu s_{\nu^*}=r s_\nu s_{\nu^*} s_\mu s_{\mu^*}.$$

Since $rs_\mu s_{\nu^*} \in \mathcal{D'}$, we have $r s_\nu s_{\mu^*} \in \mathcal{D'}$. Using the same argument we get
$$r s_\mu s_{\mu^*}= r s_\mu s_{\mu^*} s_\nu s_{\nu^*}.$$

 Now $s_\nu s_{\nu^*}, s_\mu s_{\mu^*} \in \mathcal{D}$ and $\mathcal{D}$ is commutative so
$$r s_\mu s_{\mu^*}= r s_\mu s_{\mu^*} s_\nu s_{\nu^*} = r s_\nu s_{\nu^*} s_\mu s_{\mu^*} = r s_\nu s_{\nu^*}.$$

That is, $r s_\mu s_{\mu^*} = r s_\nu s_{\nu^*}$. Finally by Lemma \ref{Diagonal1Prop} we obtain
$s_\mu s_{\mu^*} = s_\nu s_{\nu^*}$.
\end{proof}

\begin{lemma}\label{Diagonal3Lemma} Let $\mu, \nu \in \L$ with $s(\mu)=s(\nu)$ and $r \in R \setminus \{0\}$ such that $r s_\mu s_{\nu^*} \in
\mathcal{D'}$. Then $(\mu, \nu)$ is a cycline pair.
\end{lemma}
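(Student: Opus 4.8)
The plan is to build on Lemma~\ref{Diagonal2Lemma}, which already gives us $s_\mu s_{\mu^*} = s_\nu s_{\nu^*}$ from the hypothesis $r s_\mu s_{\nu^*} \in \mathcal{D'}$. To conclude that $(\mu,\nu)$ is cycline, I want to verify condition~\eqref{it1:cyclinepairs} of Proposition~\ref{Cyclinepairs}, namely that $s_{\mu\gamma}s_{(\mu\gamma)^*} = s_{\nu\gamma}s_{(\nu\gamma)^*}$ for every $\gamma \in s(\mu)\L$. The first observation is that the hypothesis is ``stable under compression by $\mathcal D$'': for any $\gamma \in s(\mu)\L$, the element $r\, s_{\mu\gamma}s_{(\nu\gamma)^*} = s_{\mu\gamma}s_{(\mu\gamma)^*}\,(r s_\mu s_{\nu^*})\, s_{\nu\gamma}s_{(\nu\gamma)^*}$ again lies in $\mathcal{D'}$, since $\mathcal{D'}$ is closed under left and right multiplication by elements of $\mathcal{D}$ (this is the same move used in the proof of Lemma~\ref{NormalLemma}). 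Note $r\,s_{\mu\gamma}s_{(\nu\gamma)^*}$ is nonzero because $s_{\mu\gamma}s_{(\nu\gamma)^*}\ne 0$ (both have source $s(\gamma)$) and $r p_{s(\gamma)}\ne 0$ in ${\rm KP}_R(\L)$ forces the product not to collapse; more cleanly, one can apply the involution and the normal-form uniqueness to see the coefficient survives.

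The second step is then immediate: apply Lemma~\ref{Diagonal2Lemma} to the element $r\, s_{\mu\gamma}s_{(\nu\gamma)^*}\in\mathcal{D'}$, with $\mu\gamma$ and $\nu\gamma$ in the roles of $\mu$ and $\nu$ (they have common source $s(\gamma)$). This yields $s_{\mu\gamma}s_{(\mu\gamma)^*} = s_{\nu\gamma}s_{(\nu\gamma)^*}$. Since $\gamma \in s(\mu)\L$ was arbitrary, condition~\eqref{it1:cyclinepairs} of Proposition~\ref{Cyclinepairs} holds, so $(\mu,\nu)$ is a cycline pair.

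The only delicate point I foresee is justifying that $r\,s_{\mu\gamma}s_{(\nu\gamma)^*}$ is genuinely a nonzero element written with a nonzero scalar coefficient, so that Lemma~\ref{Diagonal2Lemma} applies verbatim (its statement requires $r \in R\setminus\{0\}$ and implicitly that $s_\mu s_{\nu^*}$ is a standard generator). One way to be careful: compute $s_{\mu\gamma}s_{(\mu\gamma)^*}\,(s_\mu s_{\nu^*})\,s_{\nu\gamma}s_{(\nu\gamma)^*}$ directly using (KP2) and (KP3)—the left factor absorbs against $s_\mu$ to give $s_{\mu\gamma}s_{\gamma^*}$, and then pairing with $s_{\nu^*}s_{\nu\gamma}s_{(\nu\gamma)^*} = s_{\gamma}s_{(\nu\gamma)^*}$ (again by (KP3), since $d(\nu)=d(\nu)$) produces exactly $s_{\mu\gamma}s_{(\nu\gamma)^*}$, with the scalar $r$ carried along untouched. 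Since $s(\mu\gamma)=s(\gamma)=s(\nu\gamma)$, this is a nonzero standard generator, and $r\ne 0$, so the hypotheses of Lemma~\ref{Diagonal2Lemma} are met. This is a routine verification rather than a real obstacle, but it is the step that needs to be written with some care.
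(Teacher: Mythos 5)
Your proof is correct, and it takes a genuinely shorter route than the paper's. You compress the hypothesis: since $\mathcal{D}'$ is stable under left and right multiplication by elements of $\mathcal{D}$ (the same move as in the proof of Lemma~\ref{NormalLemma}), and since $s_{\mu\gamma}s_{(\mu\gamma)^*}\,(r s_\mu s_{\nu^*})\,s_{\nu\gamma}s_{(\nu\gamma)^*}=r\,s_{\mu\gamma}s_{(\nu\gamma)^*}$ (this is exactly Remark~\ref{remark:Z} with $\gamma=\eta$), you get $r\,s_{\mu\gamma}s_{(\nu\gamma)^*}\in\mathcal{D}'$ and then apply Lemma~\ref{Diagonal2Lemma} to the shifted pair $(\mu\gamma,\nu\gamma)$, which immediately yields $s_{\mu\gamma}s_{(\mu\gamma)^*}=s_{\nu\gamma}s_{(\nu\gamma)^*}$ for every $\gamma\in s(\mu)\L$, i.e.\ condition \eqref{it1:cyclinepairs} of Proposition~\ref{Cyclinepairs}. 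The paper instead fixes $\gamma$, forms $b=s_{\mu\gamma}s_{(\mu\gamma)^*}s_\mu s_{\nu^*}$, computes $(rb)b^*$ and $b^*(rb)$ by shuffling diagonal factors through $rs_\mu s_{\nu^*}$ and $rs_\nu s_{\mu^*}$, uses Lemma~\ref{Diagonal2Lemma} only for the original pair $(\mu,\nu)$ to identify the two products (equation \eqref{eq:dl}), deduces $r\,s_{\mu\gamma}s_{(\mu\gamma)^*}=r\,s_{\nu\gamma}s_{(\nu\gamma)^*}$, and finishes with Lemma~\ref{Diagonal1Prop}. Both arguments hinge on Lemma~\ref{Diagonal2Lemma}; yours bootstraps it at each extended pair and thereby avoids the adjoint computation and the separate appeal to Lemma~\ref{Diagonal1Prop}, while the paper's version keeps all uses of Lemma~\ref{Diagonal2Lemma} at the original pair at the cost of a longer calculation. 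One small remark: your worry about nonvanishing is moot, since Lemma~\ref{Diagonal2Lemma} as stated only requires $r\in R\setminus\{0\}$ and membership in $\mathcal{D}'$, and in any case $r\,s_{\mu\gamma}s_{(\nu\gamma)^*}\neq 0$ because compressing it gives $s_{(\mu\gamma)^*}\bigl(r\,s_{\mu\gamma}s_{(\nu\gamma)^*}\bigr)s_{\nu\gamma}=r\,p_{s(\gamma)}\neq 0$.
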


\begin{proof} Let $\gamma \in s(\mu)\L$. We show $s_{\mu\gamma}s_{(\mu\gamma)^*} = s_{\nu\gamma}s_{(\nu\gamma)^*}$. Because $r s_\mu s_{\nu^*} \in \mathcal{D'}$, $r s_\nu s_{\mu^*} \in \mathcal{D'}$. So, on the one hand, we have:
\begin{align*}
(r s_{\mu\gamma}s_{(\mu\gamma)^*} s_{\mu} s_{\nu^*})(s_{\mu\gamma}s_{(\mu\gamma)^*} s_{\mu} s_{\nu^*})^* & = (s_{\mu\gamma}s_{(\mu\gamma)^*} s_{\mu}
s_{\nu^*}) (r s_\nu s_{\mu^*})s_{\mu\gamma}s_{(\mu\gamma)^*}\\
& = (s_{\mu\gamma}s_{(\mu\gamma)^*} s_{\mu} s_{\nu^*}) s_{\mu\gamma}s_{(\mu\gamma)^*} (r s_\nu s_{\mu^*}) \\
&= s_{\mu\gamma}s_{(\mu\gamma)^*} (r s_{\mu} s_{\nu^*}) s_{\mu\gamma}s_{(\mu\gamma)^*} s_\nu s_{\mu^*}  \\
& = s_{\mu\gamma}s_{(\mu\gamma)^*} s_{\mu\gamma}s_{(\mu\gamma)^*} (r s_{\mu} s_{\nu^*}) s_\nu s_{\mu^*} \\
& = r s_{\mu\gamma}s_{(\mu\gamma)^*} s_{\mu} s_{\mu^*}.
\end{align*}

On the other hand,
\begin{align*}
(s_{\mu\gamma}s_{(\mu\gamma)^*} s_{\mu} s_{\nu^*})^*(r s_{\mu\gamma}s_{(\mu\gamma)^*} s_{\mu} s_{\nu^*}) & = (r s_\nu s_{\mu^*})
s_{\mu\gamma}s_{(\mu\gamma)^*} s_{\mu\gamma}s_{(\mu\gamma)^*} s_{\mu} s_{\nu^*} \\
& = s_{\mu\gamma}s_{(\mu\gamma)^*} (r s_{\nu} s_{\mu^*}) s_\mu s_{\nu^*} \\
&= r s_{\mu\gamma}s_{(\mu\gamma)^*} s_{\nu} s_{\nu^*}.
\end{align*}

But by Lemma \ref{Diagonal2Lemma}, $r s_\mu s_{\nu^*} \in \mathcal{D'}$ implies $s_\mu s_{\mu^*} = s_\nu s_{\nu^*}$. Therefore
\begin{equation}\label{eq:dl} (r s_{\mu\gamma}s_{(\mu\gamma)^*} s_{\mu} s_{\nu^*})(s_{\mu\gamma}s_{(\mu\gamma)^*} s_{\mu} s_{\nu^*})^* =
(s_{\mu\gamma}s_{(\mu\gamma)^*} s_{\mu} s_{\nu^*})^*(r s_{\mu\gamma}s_{(\mu\gamma)^*} s_{\mu} s_{\nu^*}).
\end{equation}

Now:
\begin{align*}
 r s_{\mu\gamma}s_{(\mu\gamma)^*} & = (r s_{\mu\gamma}s_{(\mu\gamma)^*} s_{\mu} s_{\nu^*})(s_{\mu\gamma}s_{(\mu\gamma)^*} s_{\mu} s_{\nu^*})^* \\
& = (s_{\mu\gamma}s_{(\mu\gamma)^*} s_{\mu} s_{\nu^*})^*(r s_{\mu\gamma}s_{(\mu\gamma)^*} s_{\mu} s_{\nu^*}) \text{ by \eqref{eq:dl}} \\
& = r s_{\nu\gamma}s_{(\nu\gamma)^*}.
\end{align*}
Hence $r s_{\mu\gamma}s_{(\mu\gamma)^*} = r s_{\nu\gamma}s_{(\nu\gamma)^*}$ and by Lemma \ref{Diagonal1Prop},
we obtain
$$s_{\mu\gamma}s_{(\mu\gamma)^*} = s_{\nu\gamma}s_{(\nu\gamma)^*}.$$
Thus by Proposition \ref{Cyclinepairs} \eqref{it1:cyclinepairs}, $(\mu,\nu)$ is a cycline pair.
\end{proof}

We are now ready to prove the main result of this section.
\begin{proof}[Proof of Theorem~\ref{Mismaximalcommutative}] To see that $\mathcal{M}=\mathcal{D'}$, we show
$\mathcal{D'} \subseteq \mathcal{M}$. Let $a \in \mathcal{D'}$.
Suppose $a$ is written in normal form
\[ a = \sum_{(\alpha,\beta) \in F} r_{\alpha,\beta}s_{\alpha}s_{\beta^*}\]
By Lemma \ref{NormalLemma}, for each $(\alpha,\beta) \in F$ we have $r_{\alpha,\beta}s_{\alpha}s_{\beta^*} \in \mathcal{D'}$. Now we apply Lemma
\ref{Diagonal3Lemma} to see that each $(\alpha, \beta)$ in $F$ is a cycline pair. Thus $a \in \mathcal{M}$.


In order to prove that $\mathcal{M}$ is a maximal commutative subalgebra
inside ${\rm KP}_R(\L)$, first recall that
$\mathcal{M}$ is commutative by Lemma~\ref{lem:Mcom}.  Now
consider $\mathcal{C}$ a commutative subalgebra of ${\rm KP}_R(\L)$ such that $\mathcal{M} \subseteq \mathcal{C}$. Since we have $\mathcal{D} \subseteq
\mathcal{M} \subseteq \mathcal{C}$ then in particular
$$\mathcal{C} \subseteq \{x \in {\rm KP}_R(\L): xd=dx \text{ for every } d \in \mathcal{D}\} = \mathcal{D'}.$$
Therefore $\mathcal{C} = \mathcal{M}$.
\end{proof}

The following corollary involving the center of a Kumjian-Pask algebra,  (studied in \cite{BaH2}) is immediate.

\begin{corollary}Let $\L$ be a row-finite
$k$-graph with no sources and $R$ be a commutative ring with $1$.   Then the center of the
Kumjian-Pask algebra
$$\mathcal{Z}({\rm KP}_R(\L)) := \{ a \in {\rm KP}_R(\Lambda): ax=xa \text{ for every } x \in {\rm KP}_R(\L) \}\subseteq \mathcal{M}.$$
\end{corollary}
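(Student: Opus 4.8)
The statement to prove is that the center $\mathcal{Z}({\rm KP}_R(\L))$ is contained in the cycline subalgebra $\mathcal{M}$. The plan is to observe that this follows immediately from Theorem~\ref{Mismaximalcommutative}, which identifies $\mathcal{M}$ with $\mathcal{D'}$, the commutant of the diagonal $\mathcal{D}$ inside ${\rm KP}_R(\L)$.

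First I would recall that $\mathcal{D} \subseteq {\rm KP}_R(\L)$, so any element $a$ in the center commutes with every element of ${\rm KP}_R(\L)$, in particular with every $d \in \mathcal{D}$. Hence $a \in \mathcal{D'}$ by the definition of $\mathcal{D'}$ in \eqref{Dprime}. Then, since Theorem~\ref{Mismaximalcommutative} gives $\mathcal{M} = \mathcal{D'}$, we conclude $a \in \mathcal{M}$. This is the entire argument; there is no real obstacle, as the corollary is precisely the kind of one-line consequence the preceding theorem was set up to yield.

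One could phrase it even more succinctly: $\mathcal{Z}({\rm KP}_R(\L)) \subseteq \mathcal{D'} = \mathcal{M}$, where the inclusion is because the center commutes with everything (so in particular with $\mathcal{D}$), and the equality is Theorem~\ref{Mismaximalcommutative}. The only thing to be careful about is that the definitions line up: $\mathcal{D'}$ is defined as the set of elements commuting with all of $\mathcal{D}$, and the center is the set of elements commuting with all of ${\rm KP}_R(\L)$; since $\mathcal{D} \subseteq {\rm KP}_R(\L)$, the center is a subset of $\mathcal{D'}$, which is the direction we need. No subtlety beyond that, so the proof is a single short paragraph.

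\begin{proof}
Let $a \in \mathcal{Z}({\rm KP}_R(\L))$. Then $a$ commutes with every element of ${\rm KP}_R(\L)$; in particular, $ad = da$ for every $d \in \mathcal{D}$, since $\mathcal{D} \subseteq {\rm KP}_R(\L)$. Hence $a \in \mathcal{D'}$, where $\mathcal{D'}$ is as in \eqref{Dprime}. By Theorem~\ref{Mismaximalcommutative}, $\mathcal{D'} = \mathcal{M}$, so $a \in \mathcal{M}$. Therefore $\mathcal{Z}({\rm KP}_R(\L)) \subseteq \mathcal{M}$.
\end{proof}
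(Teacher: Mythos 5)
Your proof is correct and is exactly the argument the paper has in mind when it calls the corollary ``immediate'': any central element commutes with $\mathcal{D}$, so it lies in $\mathcal{D'}$, which equals $\mathcal{M}$ by Theorem~\ref{Mismaximalcommutative}. Nothing further is needed.
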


\section{General uniqueness theorem for Kumjian-Pask algebras}

In this section we give a new uniqueness theorem for Kumjian-Pask algebras that says a homomorphism
on ${\rm KP}_R(\Lambda)$ is injective if and only if it is injective
on the cycline subalgebra $\mathcal{M}$. First we adapt some of the technical innovations from \cite{BNR} to our setting.

For any subset $U \subseteq X$ of a topological space $X$ we denote its
interior by ${\rm Int}(U)$ and its boundary by $\partial U$. The following definition appears
in \cite[Definition 5.2]{BNR}.  For a $k$-graph $\L$, define
\[\Sigma := \{(\alpha,\beta) \in \L \times \L \ | \ s(\alpha)=s(\beta), \alpha \neq \beta \}.\]
Then for any pair $(\alpha,\beta) \in \Sigma$ let
$$F_{\alpha,\beta}:=\{x \in \L^{\infty} \ | \ x \in Z(\alpha) \cap Z(\beta) \text{ and } \sigma^{d(\alpha)}(x)= \sigma^{d(\beta)}(x) \} .$$
We define the set of \emph{regular paths} in $\L^{\infty}$ to be
$$\mathfrak{T}_\L:=\L^{\infty} - \bigcup_{(\alpha,\beta) \in \Sigma} \partial F_{\alpha,\beta}.$$

\begin{remark}\label{RemarkRegularPaths} In this remark we recall some properties of $F_{\alpha,\beta}$ and $\mathfrak{T}_\L$, which are given in
\cite[Section 5]{BNR}.
\begin{itemize}
\item[(a)] We have $F_{\alpha,\beta} = F_{\beta,\alpha}$.
\item[(b)] We have that $F_{\alpha,\beta}$ is closed for all $(\alpha, \beta) \in \Sigma$. Indeed if $x_i \rightarrow x$ in $\L^{\infty}$ and $n \in
    \N^k$  then $\sigma^n(x_i) \rightarrow \sigma^n(x)$; in particular if $x_i \in F_{\alpha,\beta}$ for all $i$ then $x(0,d(\alpha))= \alpha$,
    $x(0,d(\beta))= \beta$ and $\sigma^{d(\alpha)}(x)=\sigma^{d(\beta)}(x)$, so $x \in F_{\alpha,\beta}$. Therefore, $\partial F_{\alpha,\beta} \subseteq
    F_{\alpha,\beta}$. Note that $\partial F_{\alpha,\beta}$ is also closed.
\item[(c)] The set $\mathfrak{T}_\L$ is dense in $\L^{\infty}$ (by the Baire Category Theorem).
So for every $v \in \L^0$ there exists an $x \in Z(v) \cap \mathfrak{T}_\L$.
\item[(d)] We have $\mathfrak{T}_\L \cap F_{\alpha,\beta} \subseteq {\rm Int}(F_{\alpha,\beta})$.
\item[(e)] The cylinder set $Z(\mu) \subseteq F_{\alpha,\beta}$ if and only if $Z(\nu\mu) \subseteq F_{\nu\alpha,\nu\beta}$.
\item[(f)] If $x \in \mathfrak{T}_\L$ and $\nu \in \L r(x)$, then $\nu x \in \mathfrak{T}_\L$.
\item[(g)] If $x \in \mathfrak{T}_\L$ and $n \in \N^k$, then $\sigma^n(x) \in \mathfrak{T}_\L$.
\end{itemize}
\end{remark}

\begin{remark} Let $E$ be a $1$-graph. An infinite path in $E$ is called \emph{essentially aperiodic} if it is either aperiodic or of the form $\mu
ccc\cdots$, where $c$ is a cycle without entry (see \cite[Definition 3.5 (B)]{GN} for more details). In \cite[Example 5.4]{BNR}, it is seen that
$\mathfrak{T}_E$ is the set of essentially aperiodic paths of $E$.
\end{remark}

The next lemma corresponds to \cite[Lemma 5.8]{BNR} and the proof translates exactly (so we omit it).

\begin{lemma}\label{ReductionLemma} For $(\alpha, \beta) \in \Sigma$ and $x \in \mathfrak{T}_\L$ we have the following:
\begin{itemize}
\item[(a)] If $x \notin F_{\alpha, \beta}$, then there exists $\mu, \nu \in \L$ such that $x \in Z(\mu) \cap Z(\nu)$ and
$$ s_\mu s_{\mu^*} s_{\alpha} s_{\beta^*} s_\nu s_{\nu^*} = 0.$$
\item[(b)] If $x \in F_{\alpha, \beta}$, then there exists $\gamma \in \L$ with $x \in Z(\alpha\gamma) \cap Z(\beta\gamma)$,
$$ s_{\alpha\gamma} s_{(\alpha\gamma)^*} s_{\alpha} s_{\beta^*} s_{\beta\gamma} s_{(\beta\gamma)^*} =  s_{\alpha\gamma} s_{(\beta\gamma)^*}, \text{ and }
(\alpha\gamma, \beta\gamma) \text{ cycline}.$$
\end{itemize}
\end{lemma}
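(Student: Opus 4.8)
The plan is to argue exactly as in \cite[Lemma 5.8]{BNR}, splitting into the two cases; throughout write $p:=d(\alpha)$, $q:=d(\beta)$, and recall that $x=x(0,d(\gamma))\,\sigma^{d(\gamma)}(x)$ whenever $x\in Z(\gamma)$.

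For (a) I would distinguish how $x$ fails to lie in $F_{\alpha,\beta}$. If $x\notin Z(\alpha)$, put $\mu:=x(0,p)$, so $d(\mu)=p$ and $\mu\neq\alpha$, and let $\nu:=r(x)$; then $x\in Z(\mu)\cap Z(\nu)$ and $s_\mu s_{\mu^*}s_\alpha=s_\mu(s_{\mu^*}s_\alpha)=0$ by (KP3), so the displayed product is $0$. The case $x\notin Z(\beta)$ is symmetric, using $\nu:=x(0,q)\neq\beta$ and $s_{\beta^*}s_\nu=0$. Otherwise $x\in Z(\alpha)\cap Z(\beta)$ with $\sigma^p(x)\neq\sigma^q(x)$, so we may choose $n\in\N^k$ with $\gamma:=\sigma^p(x)(0,n)\neq\sigma^q(x)(0,n)=:\eta$; then $\mu:=x(0,p+n)=\alpha\gamma$ and $\nu:=x(0,q+n)=\beta\eta$ satisfy $x\in Z(\mu)\cap Z(\nu)$, while $d(\gamma)=d(\eta)=n$ and $\gamma\neq\eta$, so Remark~\ref{remark:Z} gives $s_\mu s_{\mu^*}s_\alpha s_{\beta^*}s_\nu s_{\nu^*}=s_{\alpha\gamma}s_{(\alpha\gamma)^*}s_\alpha s_{\beta^*}s_{\beta\eta}s_{(\beta\eta)^*}=0$. (This case does not use $x\in\mathfrak{T}_\L$.)

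For (b), the essential input is Remark~\ref{RemarkRegularPaths}(d): since $x\in\mathfrak{T}_\L\cap F_{\alpha,\beta}$, we have $x\in{\rm Int}(F_{\alpha,\beta})$, so, as cylinder sets form a basis of the topology, there is $\mu_0$ with $x\in Z(\mu_0)\subseteq F_{\alpha,\beta}$; replacing $\mu_0$ by $x(0,d(\mu_0)\vee p\vee q)$ I may assume $\mu_0=x(0,d(\mu_0))$ and $d(\mu_0)\geq p\vee q$, still with $Z(\mu_0)\subseteq F_{\alpha,\beta}$. Since $x\in Z(\alpha)\cap Z(\beta)$, unique factorization lets me write $\mu_0=\alpha\gamma=\beta\gamma'$ with $r(\gamma)=r(\gamma')=s(\alpha)=s(\beta)$, and I claim this $\gamma$ works. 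First, $x\in Z(\alpha\gamma)=Z(\mu_0)$; and since $x\in F_{\alpha,\beta}$ forces $\sigma^p(x)=\sigma^q(x)$, from $\sigma^p(x)(0,d(\gamma))=\gamma$ we get $\sigma^q(x)(0,d(\gamma))=\gamma$, i.e.\ $x\in Z(\beta\gamma)$. Next, Remark~\ref{remark:Z} with $\eta=\gamma$ gives $s_{\alpha\gamma}s_{(\alpha\gamma)^*}s_\alpha s_{\beta^*}s_{\beta\gamma}s_{(\beta\gamma)^*}=s_{\alpha\gamma}s_{(\beta\gamma)^*}$. Finally, that $(\alpha\gamma,\beta\gamma)$ is cycline follows from Proposition~\ref{Cyclinepairs}\eqref{it3:cyclinepairs}: for $z\in s(\alpha\gamma)\L^\infty=s(\mu_0)\L^\infty$ the path $\mu_0 z$ lies in $Z(\mu_0)\subseteq F_{\alpha,\beta}$, so $\gamma z=\sigma^p(\mu_0 z)=\sigma^q(\mu_0 z)=\gamma' z$, whence $(\alpha\gamma)z=\mu_0 z=(\beta\gamma')z=\beta(\gamma z)=(\beta\gamma)z$.

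I expect the only real difficulty to be the unique-factorization bookkeeping in (b): one must shrink the interior neighbourhood $Z(\mu_0)$ so that $\mu_0$ simultaneously extends $\alpha$ and $\beta$, and then observe that the \emph{same} tail $\gamma$ realizes $x\in Z(\alpha\gamma)\cap Z(\beta\gamma)$ — which is precisely where the defining property $\sigma^{d(\alpha)}(x)=\sigma^{d(\beta)}(x)$ of $F_{\alpha,\beta}$ is used. Granting that, the cycline conclusion is automatic, since every continuation of $\mu_0$ stays inside $F_{\alpha,\beta}$.
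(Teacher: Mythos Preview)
Your proposal is correct and follows exactly the approach the paper intends: the paper omits the proof entirely, stating only that it ``translates exactly'' from \cite[Lemma~5.8]{BNR}, and your argument is precisely that translation, with the algebraic steps handled via Remark~\ref{remark:Z} and the cycline conclusion via Proposition~\ref{Cyclinepairs}\eqref{it3:cyclinepairs}.
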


\begin{theorem}\label{UniquenessTh}Let $\L$ be a row-finite
$k$-graph with no sources, $R$ be a commutative ring with $1$ and $\mathcal{M}$ be the cycline subalgebra of
${\rm KP}_R(\Lambda)$. If $\Phi: {\rm KP}_R(\Lambda) \rightarrow A$ is a ring homomorphism,
then $\Phi$ is injective if and only if $\Phi |_{\mathcal{M}}$ is injective.
\end{theorem}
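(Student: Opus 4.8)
The plan is to prove the nontrivial direction: assuming $\Phi|_{\mathcal{M}}$ is injective, show $\Phi$ is injective. The reverse direction is immediate since $\mathcal{M} \subseteq {\rm KP}_R(\L)$. So suppose $\Phi$ is not injective and let $a \in \ker\Phi$ be nonzero; the goal is to produce a nonzero element of $\mathcal{M} \cap \ker\Phi$, contradicting injectivity on $\mathcal{M}$. Write $a$ in normal form $a = \sum_{(\alpha,\beta) \in F} r_{\alpha,\beta} s_\alpha s_{\beta^*}$ with $F \subseteq \L \times \L^m$ and all $r_{\alpha,\beta} \neq 0$, using \cite[Lemma 4.2]{ACaHR}. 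The strategy, following \cite{BNR}, is to ``compress'' $a$ by multiplying on both sides by diagonal projections $s_\mu s_{\mu^*}$ chosen to kill all the genuinely non-cycline terms while retaining at least one cycline term, thereby landing inside $\mathcal{M}$.

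The key steps, in order: First, separate $F$ into the diagonal-type pairs (those with $\alpha = \beta$, contributing to $\mathcal{D}$) and the off-diagonal pairs in $F \cap \Sigma$. If there is a term with $\alpha = \beta$, then after a standard argument one can reduce to the case where some vertex projection survives; but more robustly, pick a regular infinite path $x \in \mathfrak{T}_\L$ with $r(x) = r(\alpha_0)$ for some fixed term — here use Remark \ref{RemarkRegularPaths}(c). Second, for each off-diagonal pair $(\alpha,\beta) \in F \cap \Sigma$, apply Lemma \ref{ReductionLemma}: if $x \notin F_{\alpha,\beta}$, part (a) gives $\mu,\nu$ with $s_\mu s_{\mu^*} s_\alpha s_{\beta^*} s_\nu s_{\nu^*} = 0$; if $x \in F_{\alpha,\beta}$, part (b) gives $\gamma$ with $s_{\alpha\gamma}s_{(\alpha\gamma)^*} s_\alpha s_{\beta^*} s_{\beta\gamma}s_{(\beta\gamma)^*} = s_{\alpha\gamma}s_{(\beta\gamma)^*}$ cycline. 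Since $F$ is finite and $\mathfrak{T}_\L$ is closed under the operations in Remark \ref{RemarkRegularPaths}(f),(g), one can intersect finitely many cylinder sets around $x$ and choose a single long enough initial segment $\rho$ with $x \in Z(\rho)$ so that the single projection $P_\rho := s_\rho s_{\rho^*}$ simultaneously annihilates every ``bad'' off-diagonal term and converts every ``good'' one (including the diagonal terms) into a cycline generator. Third, set $b := P_\rho \, a \, P_\rho$. By construction $b \in \mathcal{M}$, and $b \in \ker\Phi$ since $\Phi(b) = \Phi(P_\rho)\Phi(a)\Phi(P_\rho) = 0$. It remains to check $b \neq 0$: the surviving terms are $r_{\alpha,\beta} s_{\alpha\gamma} s_{(\beta\gamma)^*}$ with distinct normal-form indices (distinctness is preserved because the map $(\alpha,\beta) \mapsto (\alpha\gamma,\beta\gamma)$ is injective and compression by $P_\rho$ on a normal form either kills a term or leaves its coefficient intact), so $b$ is itself in normal form with nonzero coefficients, hence nonzero by the normal-form uniqueness in \cite[Lemma 4.2]{ACaHR}. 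This contradicts injectivity of $\Phi|_{\mathcal{M}}$.

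The main obstacle I expect is the bookkeeping in the second step: ensuring that a \emph{single} choice of $\rho$ works for \emph{all} terms of $F$ at once, and that after compression the result is genuinely nonzero rather than collapsing to zero through cancellation. The subtlety is that different off-diagonal pairs may require incompatible-looking $\mu,\nu$ in Lemma \ref{ReductionLemma}(a), so one must argue that all these can be realized by passing to a common refinement $Z(\rho) \subseteq \bigcap Z(\mu_i) \cap \bigcap Z(\alpha_j\gamma_j)$ — this is where density and the cylinder-set basis structure of $\L^\infty$ (together with Remark \ref{RemarkRegularPaths}(d),(e)) do the work, exactly as in the proof of the main theorem in \cite{BNR}. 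One also needs to handle the degenerate possibility that $a$ has \emph{only} off-diagonal terms all of which get killed, but this cannot happen: at least one term survives because we may first fix a single pair $(\alpha_0,\beta_0) \in F$, choose $x \in \mathfrak{T}_\L \cap Z(r(\alpha_0))$, and note that either $(\alpha_0,\beta_0)$ is already on the diagonal or $x \in F_{\alpha_0,\beta_0}$ can be arranged after multiplying $a$ by a suitable $s_{\alpha_0}^*$-type element first — in any case Lemma \ref{ReductionLemma}(b) guarantees a nonzero cycline survivor, so $b \neq 0$.
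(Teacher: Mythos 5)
Your overall compression scheme (fix a regular path $x\in\mathfrak{T}_\L$, use Lemma~\ref{ReductionLemma} to kill the terms with $x\notin F_{\alpha,\beta}$ and turn the terms with $x\in F_{\alpha,\beta}$ into cycline generators, then cut $a$ down by diagonal projections to land in $\mathcal{M}\cap\ker\Phi$) is exactly the paper's strategy, but your argument that the compressed element is nonzero has a genuine gap, and it is precisely the point the paper's proof is built around. After compressing by $P_\rho$ with $\rho=x(0,N)$, a surviving term coming from a pair $(\alpha,\beta)$ with $x\in F_{\alpha,\beta}$ equals $r_{\alpha,\beta}\,s_{x(0,N)}s_{x(0,N+d(\beta)-d(\alpha))^*}$; hence two distinct pairs $(\alpha,\beta)\neq(\alpha',\beta')$ in $F$ with $d(\alpha)=d(\alpha')$ (recall all $\beta$'s have the same degree in normal form) produce the \emph{same} monomial, and their coefficients add and may cancel in $R$. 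So your claim that ``distinctness is preserved because $(\alpha,\beta)\mapsto(\alpha\gamma,\beta\gamma)$ is injective and compression leaves coefficients intact'' does not give a normal form for $P_\rho aP_\rho$: injectivity of that map says nothing about the compressed monomials being distinct, and indeed $P_\rho aP_\rho$ can be $0$ for a nonzero $a$. Your closing paragraph, which is meant to handle the case where everything is killed, is not a proof either: there is no reason a regular $x$ can be arranged to lie in $F_{\alpha_0,\beta_0}$ for a chosen pair, and even a single nonzero cycline survivor can be cancelled by other survivors of the same degree.

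The paper closes this gap \emph{before} compressing: using \cite[Lemma 2.3(i)]{BaH} it replaces $a$ by $b=s_{\delta^*}as_\epsilon=r_{\delta,\epsilon}p_{s(\delta)}+\sum_{d(\alpha)\neq d(\beta)}r_{\alpha,\beta}s_\alpha s_{\beta^*}$ with $r_{\delta,\epsilon}\neq 0$, i.e.\ an element of $\ker\Phi$ whose zero-graded part is a single nonzero multiple of a vertex projection and whose remaining terms all have $d(\alpha)\neq d(\beta)$. Then, after the two-sided compression by the diagonal projections furnished by Lemma~\ref{ReductionLemma}, the zero-graded component $m_0$ of the resulting element $m$ is $r_{\delta,\epsilon}$ times a product of diagonal projections, and no cancellation with the other terms is possible because those live in nonzero graded components; finally $m_0\neq 0$ is checked through the isomorphism $\pi:\mathcal{D}\to A_{\mathcal{D}}$ of Lemma~\ref{DiagonalIsomorphismLemma}, since $\pi(m_0)=r_{\delta,\epsilon}1_U$ with $x\in U$. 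This preliminary reduction plus the graded/diagonal-function argument is the missing ingredient in your proposal; without it (or some substitute ruling out cancellation), producing a \emph{nonzero} element of $\mathcal{M}\cap\ker\Phi$ is not achieved.
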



\begin{proof} We show that $\Phi |_{\mathcal{M}}$ injective implies $\Phi$ injective.
By way of contradiction suppose we have $0 \neq a \in {\rm KP}_R(\L)$ such that $a \in {\rm Ker}\ \Phi$.
Applying \cite[Lemma 2.3 (i)]{BaH} (and writing the sum involved in normal form) we can find
$(\delta,\epsilon) \in \L \times \L$ such that
$$0 \neq  s_{\delta^*} a s_\epsilon = r_{\delta,\epsilon} p_{s(\delta)} + \sum_{(\alpha,\beta) \in F, \ d(\alpha) \neq
d(\beta)} r_{\alpha,\beta} s_\alpha s_{\beta^*},$$
where in particular $r_{\delta,\epsilon} \neq 0$. Let $b := s_{\delta^*} a s_\epsilon \neq 0$. Notice that $b \in {\rm Ker}\ \Phi$  because ${\rm Ker}\ \Phi$ is an ideal of
${\rm KP}_R(\L)$. Now by Remark \ref{RemarkRegularPaths} (c), fix $x \in Z(s(\delta)) \cap \mathfrak{T}_\L$. For each $(\alpha,\beta) \in F$ we have two
possibilities:
\begin{itemize}
\item If $x \notin F_{\alpha, \beta}$, then there exists $\mu = \mu_{\alpha,\beta}, \nu = \nu_{\alpha,\beta} \in \L$ as in Lemma \ref{ReductionLemma}(a)
    such that $x \in Z(\mu) \cap Z(\nu)$ and
$$ s_\mu s_{\mu^*} s_{\alpha} s_{\beta^*} s_\nu s_{\nu^*} = 0.$$

\item Otherwise, if $x \in F_{\alpha,\beta}$ then there exists $\gamma = \gamma_{\alpha,\beta} \in \L$ as in Lemma \ref{ReductionLemma}(b) such that $x
    \in Z(\alpha\gamma) \cap Z(\beta\gamma)$ and
$$ s_{\alpha\gamma} s_{(\alpha\gamma)^*} s_{\alpha} s_{\beta^*} s_{\beta\gamma} s_{(\beta\gamma)^*} =  s_{\alpha\gamma} s_{(\beta\gamma)^*} \text{ with }
(\alpha\gamma, \beta\gamma) \text{ cycline}.$$
\end{itemize}

Now let $m$ be the following product:
$$( \prod_{(\alpha,\beta) \in F, \ x \in F_{\alpha, \beta}}s_{\alpha\gamma} s_{(\alpha\gamma)^*})(\prod_{(\alpha,\beta) \in F, \ x \notin F_{\alpha,
\beta}}s_\mu s_{\mu^*}) \ b \ (\prod_{(\alpha,\beta) \in F, \ x \notin F_{\alpha, \beta}}s_\nu s_{\nu^*})(\prod_{(\alpha,\beta) \in F, \ x \in F_{\alpha,
\beta}}s_{\beta\gamma} s_{(\beta\gamma)^*})$$

Observe that every $s_{\alpha\gamma} s_{(\alpha\gamma)^*}, s_{\beta\gamma} s_{(\beta\gamma)^*}, s_\mu s_{\mu^*}, s_\nu s_{\nu^*}$ is inside $\mathcal{D}$.
We have that $m \in \mathcal{M}$ by construction. Since $b \in {\rm Ker}\ \Phi$, we have $m \in {\rm Ker}\ \Phi$.

Let us see that $m \neq 0$: we show that the zero-graded component of $m$ is non-zero, that is, $m_0 \neq 0$. We have
$$m= (\prod s_{\alpha\gamma} s_{(\alpha\gamma)^*})(\prod s_\mu s_{\mu^*}) \ r_{\delta,\epsilon} p_{s(\delta)} \ (\prod s_\nu s_{\nu^*})(\prod
s_{\beta\gamma} s_{(\beta\gamma)^*}) \ + \  m',$$
where $$m' =(\prod s_{\alpha\gamma} s_{(\alpha\gamma)^*})(\prod s_\mu s_{\mu^*}) \ (\sum_{d(\alpha) \neq d(\beta)} r_{\alpha,\beta} s_\alpha s_{\beta^*}) \
(\prod s_\nu s_{\nu^*})(\prod s_{\beta\gamma} s_{(\beta\gamma)^*}).$$
We claim that
$$m_0 = (\prod s_{\alpha\gamma} s_{(\alpha\gamma)^*})(\prod s_\mu s_{\mu^*}) \ r_{\delta,\epsilon} p_{s(\delta)} \ (\prod s_\nu s_{\nu^*})(\prod
s_{\beta\gamma} s_{(\beta\gamma)^*}) \in \mathcal{D}.$$

To see this, consider $(\prod s_{\alpha\gamma} s_{(\alpha\gamma)^*})(\prod s_\mu s_{\mu^*}) \ r_{\alpha,\beta} s_\alpha s_{\beta^*} \ (\prod s_\nu
s_{\nu^*})(\prod s_{\beta\gamma} s_{(\beta\gamma)^*})$ a term in $m'$. Since $ \prod s_{\alpha\gamma} s_{(\alpha\gamma)^*}, \prod s_\mu s_{\mu^*}, \prod
s_\nu s_{\nu^*}, \prod s_{\beta\gamma} s_{(\beta\gamma)^*}$ are each $0$-graded,
$$(\prod s_{\alpha\gamma} s_{(\alpha\gamma)^*})(\prod s_\mu s_{\mu^*}) \ r_{\alpha,\beta} s_\alpha s_{\beta^*} \ (\prod s_\nu s_{\nu^*})(\prod
s_{\beta\gamma} s_{(\beta\gamma)^*})$$
is $d(\alpha)-d(\beta) \neq 0$ graded. Thus $m_0$ is as claimed.

Recall from Lemma \ref{DiagonalIsomorphismLemma}, we have an isomorphism $\pi: \mathcal{D}
\rightarrow A_{\mathcal{D}}$. To see that $m_0 \neq 0$, it suffices to show $\pi(m_0) \neq 0$. Now
\begin{align*}
\pi(m_0) &=  r_{\delta,\epsilon} (\prod 1_{Z(\alpha\gamma)}) (\prod 1_{Z(\mu)} \ 1_{Z(s(\delta))}) \ (\prod 1_{Z(\nu)}) (\prod 1_{Z(\beta\gamma)}) \\
&= r_{\delta,\epsilon} 1_{U}
\end{align*}
where $$U= \left(\bigcap Z(\alpha\gamma)\right) \cap \left(\bigcap Z(\mu)\right) \cap Z(s(\delta)) \cap \left(\bigcap Z(\nu)\right) \cap \left(\bigcap
Z(\beta\gamma)\right).$$
By construction $x \in U$ and hence $1_U \neq 0$. So $\pi(m_0) \neq 0$ and finally $m \neq 0$ as desired.

This contradicts the assumption that $\Phi |_{\mathcal{M}}$ is injective because we found $0 \neq m \in \mathcal{M}$ and $m \in {\rm Ker}\ \Phi$. Therefore
$\Phi$ is injective.
\end{proof}

From Theorem \ref{UniquenessTh} we can recover the usual Cuntz-Krieger uniqueness theorem given in \cite[Theorem 4.7]{ACaHR}. First we need to consider the following
lemma which is proved similarly to \cite[Proposition 4.8]{BNR}.

\begin{lemma}\label{lem:aperiodic}
If $\L$ is aperiodic, then $(\alpha,\beta)$ is a cycline pair if and only if $\alpha = \beta$. In particular, $\mathcal{M}=\mathcal{D}$.
\end{lemma}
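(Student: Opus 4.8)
The plan is to prove Lemma~\ref{lem:aperiodic} by first establishing the forward implication — that aperiodicity forces a cycline pair $(\alpha,\beta)$ to satisfy $\alpha=\beta$ — and then deriving the statement $\mathcal{M}=\mathcal{D}$ as an immediate consequence. The reverse implication is trivial: $(\alpha,\alpha)$ is always cycline, as already noted in Remark~\ref{RemarkCycline}. For the forward direction, I would use the characterization of cycline pairs in Proposition~\ref{Cyclinepairs}\eqref{it3:cyclinepairs}: if $(\alpha,\beta)$ is cycline then $\alpha\gamma=\beta\gamma$ for every $\gamma\in s(\alpha)\L^\infty$.

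First I would suppose, for contradiction, that $(\alpha,\beta)$ is cycline with $\alpha\neq\beta$, so $(\alpha,\beta)\in\Sigma$. Applying the unique factorization property together with the equality $\alpha\gamma=\beta\gamma$ for all $\gamma\in s(\alpha)\L^\infty$, I would first argue that $d(\alpha)=d(\beta)$ is impossible (two distinct paths of the same degree cannot satisfy $\alpha\gamma=\beta\gamma$), so without loss of generality $d(\beta)\not\le d(\alpha)$ or the two are related by one extending the other; the key point is that the relation $\alpha\gamma=\beta\gamma$ for all infinite $\gamma$ forces a genuine periodicity. Concretely, writing $n=d(\alpha)$ and $m=d(\beta)$, for any $\gamma\in s(\alpha)\L^\infty$ the infinite path $x:=\alpha\gamma=\beta\gamma$ satisfies $x(0,n)=\alpha$ and $x(0,m)=\beta$, and $\sigma^n(x)=\gamma=\sigma^m(x)$. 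Since $\L$ is row-finite with no sources, pick a vertex $v=s(\alpha)=s(\beta)$ and, by aperiodicity, an aperiodic path $y\in v\L^\infty$; then $x=\alpha y$ is an infinite path with $\sigma^{d(\alpha)}(x)=y=\sigma^{d(\beta)}(x)$. If $d(\alpha)\neq d(\beta)$ this exhibits $x$ as a periodic path, and one checks the tail $\sigma^{r(x)\text{-something}}$ inherits periodicity, contradicting that $y$ (hence its shifts) is aperiodic; if $d(\alpha)=d(\beta)$ then $\alpha=x(0,d(\alpha))=x(0,d(\beta))=\beta$, contradicting $\alpha\neq\beta$.

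The main obstacle I expect is handling the periodicity bookkeeping cleanly in the case $d(\alpha)\neq d(\beta)$: one must show that $\sigma^{d(\alpha)}(x)=\sigma^{d(\beta)}(x)$ genuinely contradicts the aperiodicity of the chosen aperiodic path through $v$. The subtlety is that the definition of a periodic path requires $\sigma^p(x)=\sigma^q(x)$ with $p\neq q$ in $\N^k$, and here $p=d(\alpha)$, $q=d(\beta)$ are indeed distinct elements of $\N^k$, so $x=\alpha y$ is literally periodic — but I need the aperiodic path to live in $v\L^\infty$, and $x=\alpha y$ has range $r(\alpha)$, not $v$. This is resolved by noting $y=\sigma^{d(\alpha)}(x)$, so $\sigma^{d(\alpha)}(x)=\sigma^{d(\beta)}(x)$ applied to shifts gives periodicity of $y$ itself after a shift argument, or more directly: apply $\sigma^{d(\alpha)}$ and $\sigma^{d(\beta)}$ and compare coordinates to see $y(a,b)=y(a+|d(\alpha)-d(\beta)|\text{-adjusted},\dots)$, contradicting aperiodicity of $y$.

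Finally, for the conclusion $\mathcal{M}=\mathcal{D}$: since every cycline pair $(\alpha,\beta)$ now has $\alpha=\beta$, the generators $s_\alpha s_{\beta^*}$ of $\mathcal{M}$ are exactly the generators $s_\mu s_{\mu^*}$ of $\mathcal{D}$, so $\mathcal{M}=\langle s_\alpha s_{\alpha^*}:\alpha\in\L\rangle=\mathcal{D}$. This matches the remark earlier in the excerpt that $\mathcal{M}=\mathcal{D}$ when $\L$ is aperiodic, and combined with Theorem~\ref{UniquenessTh} recovers the Cuntz–Krieger uniqueness theorem of \cite[Theorem~4.7]{ACaHR}: injectivity of $\Phi$ is equivalent to injectivity on $\mathcal{M}=\mathcal{D}$, and \cite[Theorem~3.4]{ACaHR} gives $rp_v\neq 0$ for $r\neq 0$, which is the standard hypothesis ensuring injectivity on the diagonal.
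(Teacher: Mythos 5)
Your argument is correct and is essentially the proof the paper intends: the paper omits the proof and cites \cite{BNR} (Proposition 4.8), which argues exactly as you do via Proposition~\ref{Cyclinepairs}\eqref{it3:cyclinepairs} --- unique factorization gives $\alpha=\beta$ when $d(\alpha)=d(\beta)$, and otherwise choosing an aperiodic $y\in s(\alpha)\L^\infty$ and setting $x=\alpha y=\beta y$ yields $\sigma^{d(\beta)}(y)=\sigma^{d(\alpha)+d(\beta)}(x)=\sigma^{d(\alpha)}(y)$ with $d(\alpha)\neq d(\beta)$, contradicting the aperiodicity of $y$. The only polish needed is to replace your vaguer coordinate-adjustment phrasing with that one-line shift computation, which is the clean way to transfer the periodicity from $x$ back to $y$.
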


\begin{corollary} Let $\L$ be an aperiodic row-finite $k$-graph without sources and $R$ be a commutative ring with $1$. If $\Phi: {\rm KP}_R(\Lambda)
\rightarrow A$ is a ring homomorphism, then $\Phi$ is injective if and only if $\Phi(rp_v)\neq 0$ for all $r \in R \setminus \{0\}$ and $v \in \L^0$.
\end{corollary}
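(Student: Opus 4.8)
The plan is to derive the corollary from Theorem~\ref{UniquenessTh} together with Lemma~\ref{lem:aperiodic}. First I would observe that the forward direction is trivial: if $\Phi$ is injective, then since $rp_v \neq 0$ in ${\rm KP}_R(\L)$ for every $r \in R\setminus\{0\}$ and $v \in \L^0$ (this is part of \cite[Theorem~3.4]{ACaHR}, quoted at the end of Section~2), we get $\Phi(rp_v)\neq 0$. So the content is the reverse implication.

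\emph{Reverse direction.} Assume $\Phi(rp_v)\neq 0$ for all $r\in R\setminus\{0\}$ and $v\in\L^0$. By Theorem~\ref{UniquenessTh}, to conclude $\Phi$ is injective it suffices to show $\Phi|_{\mathcal{M}}$ is injective. Since $\L$ is aperiodic, Lemma~\ref{lem:aperiodic} gives $\mathcal{M}=\mathcal{D}$, so it is enough to show $\Phi|_{\mathcal{D}}$ is injective. The plan is to use the identification $\pi:\mathcal{D}\to A_{\mathcal{D}}$ of Lemma~\ref{DiagonalIsomorphismLemma}: an element $d\in\mathcal{D}$ can be written (via $\pi$) as $\sum_{i} r_i 1_{Z(\mu_i)}$, and by collecting terms one may arrange the cylinder sets $Z(\mu_i)$ so that the nonzero coefficients are "separated" — concretely, for a fixed large degree $n$, write $d=\sum_{\mu\in S}r_\mu\, s_\mu s_{\mu^*}$ with $S\subseteq\L^n$, $r_\mu\in R\setminus\{0\}$, since $\{s_\mu s_{\mu^*}:\mu\in\L^n\}$ are mutually orthogonal idempotents and $\mathcal{D}=\operatorname{span}_R\{s_\mu s_{\mu^*}:\mu\in\L^n, n\in\N^k\}$ is compatible with this refinement (any $1_{Z(\mu)}$ with $d(\mu)\le n$ equals $\sum_{\mu'\in\mu\L^{n-d(\mu)}}1_{Z(\mu')}$). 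If $d\neq 0$, pick $\mu_0\in S$; then $s_{\mu_0^*}\,d\,s_{\mu_0}=r_{\mu_0}p_{s(\mu_0)}$ by (KP3) and the orthogonality relations (all cross terms $s_{\mu_0^*}s_\mu s_{\mu^*}s_{\mu_0}$ with $\mu\neq\mu_0$ in $\L^n$ vanish). Hence if $d\in\operatorname{Ker}\Phi$, then $r_{\mu_0}p_{s(\mu_0)}=\Phi(s_{\mu_0^*})\Phi(d)\Phi(s_{\mu_0})$ lies in $\operatorname{Ker}\Phi$ wait — more precisely $\Phi(r_{\mu_0}p_{s(\mu_0)})=\Phi(s_{\mu_0^*}\,d\,s_{\mu_0})=0$, contradicting the hypothesis since $r_{\mu_0}\neq 0$. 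Therefore $\operatorname{Ker}(\Phi|_{\mathcal{D}})=0$.

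\emph{Main obstacle.} The only delicate point is the bookkeeping needed to write an arbitrary element of $\mathcal{D}$ in the "normalized" form $\sum_{\mu\in S}r_\mu s_\mu s_{\mu^*}$ with $S$ contained in a single $\L^n$ and the surviving coefficients genuinely nonzero; this is exactly the kind of statement packaged in \cite[Lemma~4.2]{ACaHR} (normal form) restricted to the diagonal, and it is precisely what makes the cutdown $s_{\mu_0^*}(-)s_{\mu_0}$ isolate a single scalar multiple of a vertex idempotent. Once that normalization is in hand, the argument is the standard "cut down to a vertex" trick, and no aperiodicity is used beyond invoking Lemma~\ref{lem:aperiodic} to replace $\mathcal{M}$ by $\mathcal{D}$.
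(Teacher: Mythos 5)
Your argument is correct and follows essentially the same route as the paper: the forward direction is immediate from $rp_v\neq 0$, and for the converse one invokes Theorem~\ref{UniquenessTh} and Lemma~\ref{lem:aperiodic} to reduce to injectivity on $\mathcal{D}$, which is then verified by cutting down with $s_{\mu_0^*}(\cdot)s_{\mu_0}$ and (KP3) to contradict $\Phi(rp_v)\neq 0$. The only difference is that the paper performs this cut-down just on elements of the special form $r s_\lambda s_{\lambda^*}$, whereas you first rewrite a general element of $\mathcal{D}$ as an orthogonal same-degree sum $\sum_{\mu\in S\subseteq \L^n} r_\mu s_\mu s_{\mu^*}$ via (KP4) before isolating $r_{\mu_0}p_{s(\mu_0)}$ --- a harmless extra normalization that actually supplies a detail the paper's terser proof leaves implicit.
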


\begin{proof} We verify that $\Phi(rp_v)\neq 0$ for all $r \in R \setminus \{0\}$ and $v \in \L^0$ implies $\Phi$ is injective. We show $\Phi |_{\mathcal{M}}$ is injective, which suffices by Theorem \ref{UniquenessTh}. First, by Lemma \ref{lem:aperiodic}, $\mathcal{M}=\mathcal{D}$. By way of contradiction suppose there exist $r \in R \setminus \{0\}$ and $\lambda \in \L$ such that $\Phi(r s_\lambda s_{\lambda^*}) = 0$. But then
$\Phi(s_{\lambda^*}(r s_\lambda s_{\lambda^*})s_{\lambda}) = 0$. Thus $\Phi(r p_{s(\lambda)})=0$, which is a contradiction.
\end{proof}

\section*{acknowledgements}

The first author is supported by the Marsden grant 15-UOO-071 from the Royal Society of New Zealand and a University of Otago Research Grant.
A portion of this work was carried out while she was visiting the Universidad de M\'{a}laga.
She would like to thank her hosts for their support.

The second author was partially supported by the Spanish MEC and Fondos FEDER through project MTM2013-41208-P, and by the
Junta de Andaluc\'{\i}a and Fondos FEDER, jointly, through project FQM-7156. Part of this work was carried out
during a visit of the second author to the Institute for
Research in Fundamental Sciences (IPM-Isfahan) in Isfahan, Iran. The second author thanks this
host institution for its warm hospitality and support.

The research of the
third author was in part supported by a grant from IPM (No. 94170419).


\end{document}